\DeclareRobustCommand{\SkipTocEntry}[5]{}
\setlist[enumerate,1]{label={\upshape(\arabic*)}}
\setlist[enumerate,2]{label={\upshape(\alph*)}}
\tikzset{blackv/.style={circle,fill=black,inner sep=3pt,outer sep=3pt},
         whitev/.style={circle,fill=white,draw=black,inner sep=3pt,outer sep=3pt},
         blabel/.style={
           circle, fill=white, draw=black, font=\scriptsize,
           inner sep=0.5pt, outer sep=0pt},
         redv/.style={circle,fill=red,inner sep=3pt,outer sep=3pt},
         block/.style={draw,rectangle split,rectangle split horizontal,rectangle split parts=#1},
         symbol/.style={
           draw=none,
           every to/.append style={
             edge node={node [sloped, allow upside down, auto=false]{$#1$}}}}
}
\newcolumntype{C}{>{$}c<{$}}
\newtheorem{theorem}{Theorem}[section]
\newtheorem{theoremi}{Theorem}
\newtheorem{corollaryi}[theoremi]{Corollary}
\newtheorem{corollary}[theorem]{Corollary}
\newtheorem{lemma}[theorem]{Lemma}
\newtheorem*{lemma*}{Lemma}
\newtheorem*{theorem*}{Theorem}
\newtheorem{proposition}[theorem]{Proposition}
\newtheorem{definition-proposition}[theorem]{Definition-Proposition}
\newtheorem{problem}[theorem]{Problem}
\newtheorem{question}[theorem]{Question}
\theoremstyle{definition}
\newtheorem{definition}[theorem]{Definition}
\newtheorem{remark}[theorem]{Remark}
\newtheorem{example}[theorem]{Example}
\renewcommand{\AA}{\mathcal{A}}
\newcommand{\CC}{\mathcal{C}}
\newcommand{\FF}{\mathcal{F}}
\newcommand{\FFF}{\mathsf{F}}
\newcommand{\N}{\mathbb{N}}
\newcommand{\TT}{\mathcal{T}}
\newcommand{\TTT}{\mathsf{T}}
\newcommand{\Ext}{\operatorname{Ext}\nolimits}
\newcommand{\Hom}{\operatorname{Hom}\nolimits}
\newcommand{\Spec}{\operatorname{Spec}\nolimits}
\newcommand{\Image}{\operatorname{Im}\nolimits}
\newcommand{\Kernel}{\operatorname{Ker}\nolimits}
\newcommand{\Cokernel}{\operatorname{Coker}\nolimits}
\newcommand{\Supp}{\operatorname{Supp}\nolimits}
\newcommand{\Ass}{\operatorname{Ass}\nolimits}
\newcommand{\coker}{\Cokernel}
\newcommand{\im}{\Image}
\renewcommand{\ker}{\Kernel}
\DeclareMathOperator{\moduleCategory}{\mathsf{mod}} \renewcommand{\mod}{\moduleCategory}
\DeclareMathOperator{\coh}{\mathsf{coh}}
\DeclareMathOperator{\ice}{\mathsf{ice}}
\DeclareMathOperator{\ike}{\mathsf{ike}}
\DeclareMathOperator{\ie}{\mathsf{ie}}
\DeclareMathOperator{\serre}{\mathsf{serre}}
\DeclareMathOperator{\fl}{\mathsf{fl}}
\DeclareMathOperator{\wide}{\mathsf{wide}}
\DeclareMathOperator{\tors}{\mathsf{tors}}
\DeclareMathOperator{\torf}{\mathsf{torf}}
\DeclareMathOperator{\Sub}{\mathsf{Sub}}
\DeclareMathOperator{\Fac}{\mathsf{Fac}}
\DeclareMathOperator{\Filt}{\mathsf{Filt}}
\newcommand{\iso}{\cong}
\newcommand{\defl}{\twoheadrightarrow}
\newenvironment{sbmatrix}{\left[\begin{smallmatrix}}{\end{smallmatrix}\right]}
\numberwithin{equation}{section}
\begin{document}
\title{IE-closed subcategories of commutative rings are torsion-free classes}

\author[H. Enomoto]{Haruhisa Enomoto}
\address{Graduate School of Science, Osaka Metropolitan University, 1-1 Gakuen-cho, Naka-ku, Sakai, Osaka 599-8531, Japan}
\email{henomoto@omu.ac.jp}

\subjclass[2020]{13C60, 18E10, 18E40}
\keywords{IKE-closed subcategories, IE-closed subcategories, torsion-free classes, commutative noetherian rings}
\begin{abstract}
  Let $\CC$ be a subcategory of the category of finitely generated $R$-modules over a commutative noetheian ring $R$. We prove that, if $\CC$ is closed under images and extensions (which we call an IE-closed subcategory), then $\CC$ is closed under submodules, and hence is a torsion-free class.
  This result complements Stanley--Wang's result in some sense and, furthermore, provides a complete answer to the question posed by Iima--Matsui--Shimada--Takahashi.
  The proof relies on the general theory of IE-closed subcategories in an abelian category, which states that IE-closed subcategories are precisely the intersections of torsion classes and torsion-free classes.
  Additionally, we completely characterize right noetherian rings such that every IE-closed subcategory (or torsion-free class) is a Serre subcategory.
\end{abstract}

\maketitle

\section{Introduction}
We begin by presenting the main result of this paper:
\begin{theoremi}[= Theorem \ref{thm:main}]\label{thm:A}
  Let $R$ be a commutative noetherian ring, and let $\CC$ be a subcategory of the category $\mod R$ of finitely generated $R$-modules. Then the following conditions are equivalent.
  \begin{enumerate}
    \item $\CC$ is closed under submodules and extensions (i.e., $\CC$ is a torsion-free class).
    \item $\CC$ is closed under taking images (i.e., for every $f \colon C_1 \to C_2$ with $C_1, C_2 \in \CC$, we have $\im f \in \CC$) and extensions.
  \end{enumerate}
\end{theoremi}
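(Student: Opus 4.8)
The plan is to combine the paper's general structure theorem for IE-closed subcategories with an elementary fact about associated primes over a commutative noetherian ring; the module-theoretic input is what forces the ``torsion-free direction'' to be controlled by the ``torsion direction''.

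First, $(1)\Rightarrow(2)$ is routine and I would dispatch it immediately: a torsion-free class is extension-closed by definition, and the image of a morphism $C_1\to C_2$ with $C_1,C_2\in\CC$ is a submodule of $C_2\in\CC$, hence lies in $\CC$. For $(2)\Rightarrow(1)$ I would invoke the general theory to write the IE-closed subcategory $\CC$ as $\CC=\TT\cap\FF$ with $\TT$ a torsion class and $\FF$ a torsion-free class in $\mod R$. Such a $\CC$ is automatically extension-closed, so the task reduces to proving $\CC$ is closed under submodules; and since $\FF$ is already closed under submodules, for $M\in\CC$ and a submodule $N\le M$ it is enough to show $N\in\TT$.

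The heart of the argument is the following lemma, which is exactly where commutativity and the noetherian hypothesis are used: \emph{for any finitely generated $R$-module $M$, any submodule $N\le M$, and any nonzero quotient $Q$ of $N$, one has $\Hom_R(M,Q)\neq 0$.} To prove it I would pick $\mathfrak p\in\Ass Q$ and write $Q=N/K$, so $\mathfrak p=\operatorname{ann}_R(\bar n)$ for some $\bar n\in Q$ with a lift $n\in N$; then $\mathfrak p\supseteq\operatorname{ann}_R(n)\supseteq\operatorname{ann}_R(M)$ because $n\in M$, so $\mathfrak p\in\Supp M$. Consequently $M/\mathfrak p M$ is a nonzero finitely generated module over the domain $R/\mathfrak p$ which, since $M_{\mathfrak p}\neq0$, has positive rank; hence it maps nontrivially to $R/\mathfrak p$ (kill the torsion submodule, embed the rest in a free module, project to a coordinate). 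Composing $M\twoheadrightarrow M/\mathfrak p M\to R/\mathfrak p$ with the inclusion $R/\mathfrak p\hookrightarrow Q$ furnished by $\mathfrak p\in\Ass Q$ produces the desired nonzero map $M\to Q$.

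With the lemma in hand the conclusion is quick. Suppose for contradiction that $N\notin\TT$, and use the torsion pair $(\TT,\TT^{\perp})$ to form the canonical sequence $0\to tN\to N\to N/tN\to 0$ with $tN$ the $\TT$-torsion submodule and $N/tN\in\TT^{\perp}$; as $N\notin\TT$, the module $Q:=N/tN$ is a \emph{nonzero} quotient of the submodule $N\le M$, so $\Hom_R(M,Q)\neq0$ by the lemma, contradicting $M\in\TT$ and $Q\in\TT^{\perp}$. Hence $N\in\TT$, so $N\in\TT\cap\FF=\CC$, and $\CC$ is a torsion-free class. The step I expect to require the most care is precisely this key lemma together with the observation that it suffices: one must notice that the associated primes of a subquotient of $M$ all lie in $\Supp M$ (so that $M$ genuinely detects $R/\mathfrak p$, and thus $Q$), and that passing to the torsion pair $(\TT,\TT^{\perp})$ is what lets this purely module-theoretic input finish the job — a direct attack via prime filtrations of $N$ would instead force a considerably fussier induction. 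A minor further point is to check that the ambient structure theorem really applies to the abelian category $\mod R$ and that $\TT$ and $\FF$ may be taken inside $\mod R$.
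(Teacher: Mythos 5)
Your proof is correct, and while it shares the paper's top-level skeleton --- both arguments write the IE-closed subcategory as $\CC=\TT\cap\FF$ via the general structure theorem (Corollary \ref{cor:ie-as-inter}) and then reduce to showing that the torsion class $\TT$ is closed under submodules --- it diverges at the decisive step. The paper disposes of that step in one line by citing Stanley--Wang (Theorem \ref{thm:SW}): every torsion class of $\mod R$ is a Serre subcategory, hence in particular a torsion-free class, so $\CC$ is an intersection of two torsion-free classes. You instead prove the needed closure property from scratch via your lemma that $\Hom_R(M,Q)\neq 0$ for every nonzero quotient $Q$ of a submodule of $M$; the argument (any $\mathfrak p\in\Ass Q$ contains $\operatorname{ann}_R M$, hence lies in $\Supp M$, hence $M/\mathfrak pM$ has positive rank over $R/\mathfrak p$ and maps onto a nonzero submodule of $R/\mathfrak p\hookrightarrow Q$) is sound, as is the deduction using the canonical sequence $0\to tN\to N\to N/tN\to 0$ of the torsion pair $(\TT,\TT^\perp)$, which exists because $\mod R$ is a noetherian abelian category. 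In effect you have reproved the inclusion $\tors R\subseteq\serre R$, i.e.\ the portion of Stanley--Wang's theorem the paper actually uses, so your write-up is self-contained where the paper's is not; the price is that your argument is specific to the commutative noetherian setting, whereas the paper's citation-based step also transfers to the non-commutative generalization mentioned in its Remark (via Iyama--Kimura). Both are valid; yours buys independence from \cite{SW} at the cost of a page of commutative algebra.
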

It is clear that condition (1) implies condition (2) since $\im f$ is a submodule of $C_2$. Remarkably, our main result establishes that the converse holds as well. We now further discuss the context and motivation behind this theorem.

In the study of abelian categories, understanding the structure of subcategories has been a central theme.
One of the earliest and most influential results in this area is the classification of \emph{Serre subcategories} by Gabriel \cite{gab}. He showed that, for a Noetherian scheme $X$, Serre subcategories of the category of coherent sheaves $\coh X$ can be classified in terms of specialization closed subsets of $X$. This result laid the groundwork for subsequent research on subcategories in various abelian categories.

Another important development was the study of \emph{torsion classes} and \emph{torsion-free classes} of an abelian category. Happel--Reiten--Smal\o\ \cite{HRS} proved that they correspond bijectively to certain $t$-structures of the bounded derived category of it. This correspondence has played a crucial role in the study of Bridgeland stability conditions via the simple tilt procedure.
More recently, the representation theory of algebras has seen a surge of interest in the study of torsion(-free) classes by $\tau$-tilting theory of Adachi--Iyama--Reiten \cite{AIR}, which enables us to combinatorial description of subcategories.

In recent years, researchers have also considered \emph{wide subcategories} of abelian categories. For example, Br\"uning \cite{br} related wide subcategories to thick subcategories of derived categories, and Marks--\v{S}t\!'ov\'{i}\v{c}ek \cite{MS} wide subcategories are closely related to torsion(-free) classes with a bijection between the two under certain finiteness conditions.

The author recently proposed the study of more general subcategories in an abelian category, namely, \emph{ICE-closed}, \emph{IKE-closed}, and \emph{IE-closed} subcategories in \cite{eno-mono, eno-rigid-ice, ES1, ES2}. These generalize the aforementioned subcategories, and defined by the condition that they are closed under taking some of the following operations: Images, Cokernels, Kernels, and Extensions (the initials correspond to these classes of subcategories).

For clarity, let us summarize the definitions of the aforementioned subcategories of an abelian category. Figure \ref{fig:rel} summarizes the implications between these subcategories.
\begin{definition}
  Let $\CC$ be a subcategory of an abelian category $\AA$.
  \begin{itemize}
    \item $\CC$ is \emph{Serre} if it is closed under subobjects, quotients, and extensions.
    \item $\CC$ is a \emph{torsion class} if it is closed under quotients and extensions.
    \item $\CC$ is a \emph{torsion-free class} if it is closed under subobjects and extensions.
    \item $\CC$ is \emph{wide} if it is closed under kernels, cokernels, and extensions.
    \item $\CC$ is \emph{ICE-closed} if it is closed under images, cokernels, and extensions.
    \item $\CC$ is \emph{IKE-closed} if it is closed under images, kernels, and extensions.
    \item $\CC$ is \emph{IE-closed} if it is closed under images and extensions.
  \end{itemize}
  Over a right noetherian ring $\Lambda$, we denote by $\serre \Lambda$, $\tors\Lambda$, $\torf\Lambda$, $\wide\Lambda$, $\ice\Lambda$, $\ike\Lambda$, and $\ie\Lambda$, the set of Serre subcategories, torsion classes, torsion-free classes, wide subcategories, ICE-closed subcategories, IKE-closed subcategories, and IE-closed subcategories of the category $\mod\Lambda$ of finitely generated right $\Lambda$-modules respectively.
\end{definition}

\begin{figure}
  \begin{tikzpicture}[
      arrow/.style={-Implies,double equal sign distance,shorten >=1pt,shorten <=1pt},
      xscale=3, yscale=1.2
    ]

    \fill[gray!20, rounded corners]
    (-0.3, 0.5) -- (-0.3, -2.5) -- (0.7, -3.5) --
    (1.5, -3.5) -- (1.5, -1) -- (0.5, 0.5) -- cycle;

    \fill[gray!50, rounded corners]
    (-1.5, -0.5) -- (-1.5, -3.5) -- (-0.5, -4.5) --
    (0.3, -4.5) -- (0.3, -3.5) -- (-0.5, -2.5) -- (-0.5, -0.5) -- cycle;

    \node (Serre) at (0, 0) {Serre};
    \node (torsion-free) at (-1, -1) {torsion-free class};
    \node (torsion) at (1, -1) {torsion class};
    \node (wide) at (0, -2) {wide};
    \node (IKE) at (-1, -3) {IKE-closed};
    \node (ICE) at (1, -3) {ICE-closed};
    \node (IE) at (0, -4) {IE-closed};

    \draw[arrow] (Serre) -- (torsion);
    \draw[arrow] (Serre) -- (torsion-free);
    \draw[arrow] (Serre) -- (wide);
    \draw[arrow] (torsion-free) -- (IKE);
    \draw[arrow] (torsion) -- (ICE);
    \draw[arrow] (wide) -- (IKE);
    \draw[arrow] (wide) -- (ICE);
    \draw[arrow] (ICE) -- (IE);
    \draw[arrow] (IKE) -- (IE);

  \end{tikzpicture}
  \caption{Relations of classes of subcategories of an abelian category}
  \label{fig:rel}
\end{figure}
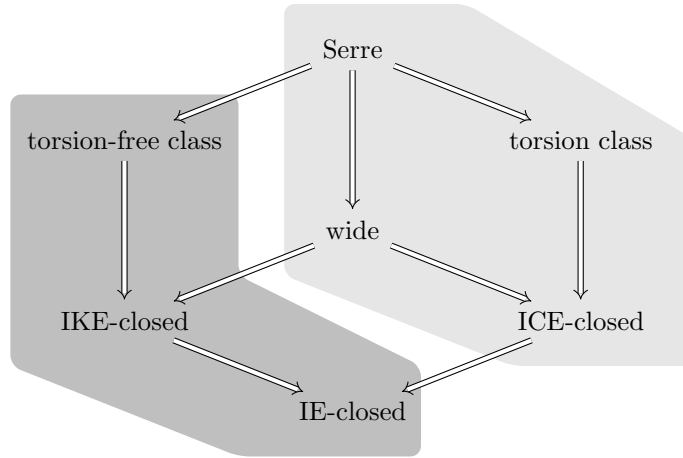

As mentioned earlier, there are scattered studies of these types of subcategories. The natural (and challenging) problem is to consider the following:

\begin{problem}
For a given noetherian ring $\Lambda$, classify and establish relationships between the aforementioned classes of subcategories of $\mod\Lambda$.
\end{problem}

Various works in this direction include those mentioned previously. Since considering general noetherian rings can be difficult, there are two contrasting directions that restrict the class of rings to the following:

\begin{enumerate}
  \item (Non-commutative) finite-dimensional $k$-algebras $\Lambda$ over a field $k$.
  \item Commutative noetherian rings $R$.
\end{enumerate}

In the commutative case, the following result by Stanley--Wang is quite intriguing:
\begin{theorem}[{\cite[Corollary 7.1]{SW}}]\label{thm:SW}
  Let $R$ be a commutative noetherian ring. Then $\serre R = \tors R = \wide R = \ice R$ hold.
\end{theorem}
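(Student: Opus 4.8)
The plan is to deduce this from Theorem~\ref{thm:A}. All the inclusions needed in one direction are formal and are exactly those recorded in Figure~\ref{fig:rel}: a Serre subcategory is both a torsion class and wide, while torsion classes as well as wide subcategories are ICE-closed, so that
\[
  \serre R \subseteq \tors R \subseteq \ice R
  \qquad\text{and}\qquad
  \serre R \subseteq \wide R \subseteq \ice R .
\]
Hence it suffices to prove the single reverse inclusion $\ice R \subseteq \serre R$, that is, that every ICE-closed subcategory $\CC$ of $\mod R$ is closed under submodules and quotients.

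So let $\CC \in \ice R$. Since $\CC$ is closed under images and extensions it is IE-closed, and therefore, by Theorem~\ref{thm:A}, it is a torsion-free class; in particular $\CC$ is closed under submodules. Closure under quotients now follows formally: given $C \in \CC$ and a submodule $N \leq C$, we have $N \in \CC$ by submodule-closedness, and hence $C/N \cong \coker(N \hookrightarrow C) \in \CC$ because $\CC$ is closed under cokernels. Thus $\CC$ is closed under submodules, quotients, and extensions, i.e.\ it is a Serre subcategory. This establishes $\ice R \subseteq \serre R$, and with it the theorem.

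In this approach the whole difficulty is absorbed into Theorem~\ref{thm:A}, the main result of the paper; once that is granted, the deduction above is purely formal. A proof not relying on Theorem~\ref{thm:A} would most naturally proceed via supports: writing $\Supp\CC := \bigcup_{C\in\CC}\Supp C$, one would show that this subset of $\Spec R$ is specialization-closed, and then identify $\CC$ with the Serre subcategory $\{\,M \in \mod R : \Supp M \subseteq \Supp\CC\,\}$, the inclusion ``$\supseteq$'' being obtained by filtering such an $M$ by cyclic modules $R/\mathfrak{q}$ (with all $\mathfrak{q} \in \Supp\CC$) and invoking closure under extensions. The pivotal step --- and, I expect, the main obstacle --- is to prove $R/\mathfrak{p} \in \CC$ for every $\mathfrak{p} \in \Supp\CC$: if $C \in \CC$ has $\mathfrak{p} \in \Supp C$, then iterating cokernels of multiplication-by-$r$ endomorphisms shows $C/IC \in \CC$ for every ideal $I$, Nakayama's lemma gives $\mathfrak{p} \in \Supp(C/\mathfrak{p}C)$ so that $C/\mathfrak{p}C$ is a faithful finitely generated module over the domain $R/\mathfrak{p}$ and thus contains a submodule isomorphic to $R/\mathfrak{p}$, and concluding $R/\mathfrak{p} \in \CC$ from this last fact again requires closure under submodules --- i.e.\ it comes back to Theorem~\ref{thm:A}. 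Either way, establishing closure under submodules is the heart of the matter.
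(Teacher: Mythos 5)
Your argument is circular within the logical structure of the paper. Theorem \ref{thm:SW} is not proved in the paper at all: it is quoted as an external result of Stanley--Wang \cite[Corollary 7.1]{SW}, and it is precisely the input used to prove Theorem \ref{thm:A} (= Theorem \ref{thm:main}). In the proof of (3) $\Rightarrow$ (1) there, an IE-closed $\CC$ is written as $\TT \cap \FF$ with $\TT$ a torsion class and $\FF$ a torsion-free class, and then Theorem \ref{thm:SW} is invoked to conclude that $\TT$ is Serre, hence itself a torsion-free class. So you cannot turn around and deduce Theorem \ref{thm:SW} from Theorem \ref{thm:A}: that reverses the dependency. (The purely formal part of your deduction is fine as far as it goes: the inclusions $\serre R \subseteq \tors R \subseteq \ice R$ and $\serre R \subseteq \wide R \subseteq \ice R$ are standard, and once one knows every IE-closed subcategory is closed under submodules, cokernel-closure does upgrade an ICE-closed subcategory to a Serre one. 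The problem lies entirely in what you take as given.)

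Your second sketch, the one ``not relying on Theorem \ref{thm:A}'', is the right kind of strategy for an honest proof: reduce to showing $R/\mathfrak{p} \in \CC$ for every $\mathfrak{p} \in \Supp \CC$, and then recover $\CC$ from its support using prime filtrations and closure under extensions. But, as you yourself observe, your route to $R/\mathfrak{p} \in \CC$ (embedding $R/\mathfrak{p}$ into the faithful $R/\mathfrak{p}$-module $C/\mathfrak{p}C$) ends by needing closure under submodules, which is exactly what a torsion class or an ICE-closed subcategory is not assumed to have and is the genuine difficulty that Stanley--Wang's argument must circumvent. So neither branch of the proposal yields a proof. The correct treatment of this statement in the context of this paper is simply to cite \cite[Corollary 7.1]{SW}, as the author does.
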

This result suggests that, in the commutative case, some classes of subcategories may coincide in general, which is a fascinating phenomenon unique to commutative rings.
In this direction, Iima--Matsui--Shimada--Takahashi recently proposed the following question:
\begin{question}[{\cite[Question 3.3]{IMST}}]\label{q:ike-is-torf}
  Let $R$ be a commutative noetherian ring. Is every IKE-closed subcategory of $\mod R$ torsion-free?
\end{question}

They addressed this question for the case of numerical semigroup rings by using the detailed observation of the module category based on careful calculations, and provided a combinatorial sufficient condition for an affirmative answer \cite[Theorem 1.3]{IMST}, together with lots of examples. Moreover, they completely classified IKE-closed subcategories of $\mod R$ for this case.

Motivated by their question and results, this paper investigates torsion-free classes, IKE-closed subcategories, and IE-closed subcategories of $\mod R$. Theorem \ref{thm:A} provides a complete answer to Question \ref{q:ike-is-torf} as follows and, in fact, a slightly stronger result in that we also consider IE-closed subcategories.
\begin{corollaryi}
  Let $R$ be a commutative noetherian ring. Then $\torf R = \ike R = \ie R$ holds.
  In particular, Question \ref{q:ike-is-torf} is affirmative.
\end{corollaryi}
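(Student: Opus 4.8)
The plan is to deduce this statement directly from Theorem \ref{thm:main} together with the elementary implications recorded in Figure \ref{fig:rel}; no commutative-algebra input is needed beyond what Theorem \ref{thm:main} already supplies. First I would establish the easy chain $\torf R \subseteq \ike R \subseteq \ie R$. For the first inclusion, let $\CC$ be a torsion-free class and $f\colon C_1 \to C_2$ a morphism in $\CC$; then $\ker f$ is a submodule of $C_1$ and $\im f$ is a submodule of $C_2$, so both lie in $\CC$ because torsion-free classes are closed under submodules, while closure under extensions is part of the hypothesis. Hence $\CC \in \ike R$. The inclusion $\ike R \subseteq \ie R$ is immediate upon forgetting closure under kernels.

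For the reverse inclusion $\ie R \subseteq \torf R$, which closes the loop, I would invoke Theorem \ref{thm:main}: it asserts exactly that over a commutative noetherian ring every IE-closed subcategory of $\mod R$ is automatically closed under submodules, and therefore is a torsion-free class. Combining this with the chain above yields $\torf R = \ike R = \ie R$. The final clause, that Question \ref{q:ike-is-torf} is affirmative, is then a formality: the question asks precisely whether every member of $\ike R$ lies in $\torf R$, which is the (already trivial) inclusion $\ike R \subseteq \torf R$ contained in the displayed equality.

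I do not anticipate any genuine obstacle in this deduction; the entire difficulty is concentrated in Theorem \ref{thm:main} itself. As the abstract indicates, that theorem is proved by first using the structural description of IE-closed subcategories of an abelian category as the intersections $\TT \cap \FF$ of a torsion class $\TT$ and a torsion-free class $\FF$, and then arguing on the level of $\mod R$ that such an intersection is forced to be a torsion-free class — so the work, and any real obstacle, lives there rather than in the corollary.
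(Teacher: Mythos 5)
Your proposal is correct and matches the paper's route exactly: the corollary is obtained by combining the trivial inclusions $\torf R \subseteq \ike R \subseteq \ie R$ with Theorem \ref{thm:main} (itself proved via Corollary \ref{cor:ie-as-inter} and Stanley--Wang), and the answer to Question \ref{q:ike-is-torf} follows formally from the resulting equality. No discrepancies to report.
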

This situation, combined with Theorem \ref{thm:SW}, indicates that the classes of subcategories in each of the two gray regions in Figure \ref{fig:rel} coincide, so they are actually only two classes of subcategories for the commutative case.

Thanks to this theorem, the study of I(K)E-closed subcategories is completely the same as the study of torsion-free classes for the commutative case. For example, we can apply Takahashi's classification \cite{takahashi} of torsion-free classes using $\Spec R$.
Summarizing Takahashi's, and Gabriel's and our results, we obtain the following commutative diagram with the horizontal maps being bijective:
\[
  \begin{tikzcd}
    \serre R = \tors R = \wide R = \ice R \rar["\Supp"] \dar[symbol=\subseteq]
    & \{ \text{specialization-closed subsets of $\Spec R$} \} \dar[symbol=\subseteq] \\
    \torf R = \ike R = \ie R \rar["\Ass"] & \{ \text{subsets of $\Spec R$} \}
  \end{tikzcd}
\]
Here, for a subcategory $\CC$ of $\mod R$, we write $\Supp \CC := \bigcup \{ \Supp_R X \mid X \in \CC \}$ and $\Ass \CC := \bigcup \{ \Ass_R X \mid X \in \CC \}$.

One more natural question is when the all of these classes of subcategories are the same. Actually, this characterizes artinian rings as follows.
\begin{corollaryi}[= Corollary \ref{cor:art-char}]
  Let $R$ be a commutative noetherian ring $R$. Then the following conditions are equivalent:
  \begin{enumerate}
    \item $R$ is artinian.
    \item $\serre R = \tors R = \torf R = \wide R = \ice R = \ike R = \ie R$ holds.
  \end{enumerate}
\end{corollaryi}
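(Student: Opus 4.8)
The plan is to collapse the seven classes in condition~(2) to just two, and then to read off the equivalence from the classifications recalled in the introduction. First, by Theorem~\ref{thm:SW} we always have $\serre R = \tors R = \wide R = \ice R$, and by Theorem~\ref{thm:A} we always have $\torf R = \ie R$; hence $\torf R = \ike R = \ie R$ in view of the implications in Figure~\ref{fig:rel} (torsion-free $\Rightarrow$ IKE-closed $\Rightarrow$ IE-closed, together with $\torf R = \ie R$). Since every Serre subcategory is in particular closed under submodules and extensions, $\serre R \subseteq \torf R$ always holds. Consequently condition~(2) is equivalent to the single equality $\serre R = \torf R$, and it suffices to show that this equality holds precisely when $R$ is artinian.

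Next I would use the commutative square displayed in the introduction. Its horizontal maps $\Supp \colon \serre R \to \{\text{specialization-closed subsets of }\Spec R\}$ and $\Ass \colon \torf R \to \{\text{subsets of }\Spec R\}$ are bijections, by the classifications of Gabriel and of Takahashi respectively, and the square commutes, with the two vertical maps being the inclusions $\serre R \subseteq \torf R$ and $\{\text{specialization-closed subsets}\} \subseteq \{\text{all subsets}\}$. Therefore the left-hand inclusion is an equality if and only if the right-hand one is; that is, $\serre R = \torf R$ if and only if every subset of $\Spec R$ is specialization-closed.

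It then remains to observe that every subset of $\Spec R$ is specialization-closed exactly when $R$ is artinian. If every subset is specialization-closed, then for each $\mathfrak{p} \in \Spec R$ the singleton $\{\mathfrak{p}\}$ is specialization-closed, so $V(\mathfrak{p}) = \{\mathfrak{p}\}$; thus every prime of $R$ is maximal, $\dim R = 0$, and since $R$ is noetherian it is artinian. Conversely, if $R$ is artinian then $\dim R = 0$, so $V(\mathfrak{p}) = \{\mathfrak{p}\}$ for every $\mathfrak{p} \in \Spec R$, and hence every subset of $\Spec R$---being a union of such singletons---is specialization-closed. This establishes the equivalence of~(1) and~(2).

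The argument is short once Theorems~\ref{thm:SW} and~\ref{thm:A} and Takahashi's classification are in hand, so I do not expect a serious obstacle; the one point deserving care is the compatibility asserted by the commutative square---precisely, that the inclusion $\serre R \subseteq \torf R$ corresponds, under the two bijections, to the inclusion of specialization-closed subsets into all subsets---which is exactly the content of that diagram and which I would simply cite. If one preferred a route independent of Takahashi's theorem, one could instead first prove the general characterization, valid for any right noetherian ring, of when every IE-closed subcategory is Serre, and then specialize it to the commutative case; the proof above is the direct shortcut available once the classification by $\Spec R$ is known.
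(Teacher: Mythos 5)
Your argument is correct, but it takes a genuinely different route from the paper's. The paper deduces Corollary \ref{cor:art-char} as a special case of Theorem \ref{thm:char-all-same}, the general characterization for right noetherian rings: condition (2) is reduced to $\serre R=\torf R$ exactly as you do, and then the non-commutative theorem shows this holds iff $\Lambda$ is Morita equivalent to a finite product of right artinian local rings, which for commutative $R$ is precisely being artinian (a commutative artinian ring is a finite product of artinian local rings). You instead invoke the classifications of Gabriel and Takahashi and translate $\serre R=\torf R$ into the statement that every subset of $\Spec R$ is specialization-closed, which you correctly identify with $\dim R=0$; the paper explicitly acknowledges this alternative in the remark preceding the corollary (``this can be also proved using Gabriel and Takahashi's classification''). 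Your route is shorter given the classification theorems but leans on Takahashi's nontrivial result and on the commutativity of the displayed square (i.e., that $\Ass\CC=\Supp\CC$ for a Serre subcategory $\CC$), which the paper asserts but does not prove; you rightly flag this as the point needing care, and it does hold (for Serre $\CC$ one has $R/\mathfrak p\in\CC$ for every $\mathfrak p\in\Supp\CC$, whence $\Supp\CC\subseteq\Ass\CC$, and the reverse inclusion is automatic). The paper's route buys more: a self-contained elementary argument (simple submodules, a noetherian chain argument, and a splitting of extensions between non-isomorphic simples) that yields the stronger non-commutative Theorem \ref{thm:char-all-same}, of which the commutative corollary is an immediate instance.
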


Although this fact can be easily deduced from the classifications of subcategories due to Gabriel and Takahashi, we derive it from the following complete characterization of the same situation for a non-commutative noetherian ring.
\begin{theoremi}[= Theorem \ref{thm:char-all-same}]
  Let $\Lambda$ be a right noetherian ring. Then the following conditions are equivalent:
  \begin{enumerate}
    \item $\Lambda$ is Morita equivalent to a finite direct product of right artinian local rings.
    \item $\torf\Lambda = \serre \Lambda$ holds.
    \item  $\serre \Lambda = \tors \Lambda = \torf \Lambda = \wide \Lambda = \ice \Lambda = \ike \Lambda = \ie \Lambda$ holds.
  \end{enumerate}
\end{theoremi}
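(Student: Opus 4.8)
The plan is to prove the cycle $(3)\Rightarrow(2)\Rightarrow(1)\Rightarrow(3)$. The implication $(3)\Rightarrow(2)$ is immediate, and for the other directions the guiding observation is that every one of the seven classes lies between $\serre\Lambda$ and $\ie\Lambda$ in the containment order of Figure \ref{fig:rel}, so it will suffice to pin $\Lambda$ down from the single equality $\torf\Lambda=\serre\Lambda$ and then, conversely, to show $\serre\Lambda=\ie\Lambda$ for such $\Lambda$.

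For $(2)\Rightarrow(1)$ I would exploit two families of torsion-free classes defined by socles. First, $\FF_0:=\{X\in\mod\Lambda\mid \operatorname{soc}X=0\}$ is a torsion-free class: closure under submodules is clear, and in a short exact sequence $0\to X'\to X\to X''\to 0$ the semisimple module $\operatorname{soc}X$ meets $X'$ inside $\operatorname{soc}X'=0$, hence embeds into $\operatorname{soc}X''=0$. If $\Lambda$ were not right artinian, the socle series of $\Lambda_\Lambda$ would stabilise by noetherianity without reaching $\Lambda_\Lambda$ (otherwise $\Lambda$ has finite length), producing a nonzero finitely generated $M$ with $\operatorname{soc}M=0$; its simple quotient has nonzero socle, so $\FF_0$ is not closed under quotients, hence not Serre, contradicting $(2)$. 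Thus $\Lambda$ is right artinian. Second, for each simple $T$ the class $\FF_T:=\{X\mid \operatorname{soc}X\in\add T\}$ is a torsion-free class by the same socle computation, so by $(2)$ it is Serre. A nonsplit extension $0\to T\to M\to S\to 0$ with $S\not\cong T$ simple has $M$ indecomposable of length two, so $\operatorname{soc}M=T$ and $M\in\FF_T$, while the quotient $S\notin\FF_T$; hence $\Ext^1_\Lambda(S,T)=0$ for all non-isomorphic simples. It then remains to argue that a right artinian ring with no $\Ext^1$ between non-isomorphic simples is, up to Morita equivalence, a finite product of right artinian local rings: its Ext-quiver (set up via $\rad/\rad^2$) has no edges joining distinct vertices, so the ring decomposes as a finite product of rings each having a unique simple module, and each such factor is Morita equivalent to a right artinian local ring; this yields $(1)$.

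For $(1)\Rightarrow(3)$ I would first reduce, using Morita invariance of all seven lattices and the fact that a subcategory of any of the seven types in a finite product $\prod_i\mod\Lambda_i$ decomposes as a product of subcategories of the same type in the factors (closure under images forces closure under direct summands, closure under extensions forces closure under finite direct sums, and one splits along the central idempotents of the product), to the case of a single right artinian local ring $\Lambda_i$. For such $\Lambda_i$ with unique simple $S$, any nonzero $M$ in an IE-closed subcategory $\CC$ has $S$ both as a submodule (its socle) and as a quotient (its top), so the composite $M\twoheadrightarrow S\hookrightarrow M$ has image $\cong S$ and hence $S\in\CC$; since $\CC$ is extension-closed and $\Filt(S)=\mod\Lambda_i$ (every finitely generated module over a right artinian local ring has all composition factors $\cong S$), we get $\CC=\mod\Lambda_i$, so $\ie\Lambda_i=\{0,\mod\Lambda_i\}$. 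As $\{0,\mod\Lambda_i\}\subseteq\serre\Lambda_i$ and all seven classes lie between $\serre\Lambda_i$ and $\ie\Lambda_i$, they coincide, and the product decomposition propagates this to $\Lambda$.

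The step I expect to be the main obstacle is the last part of $(2)\Rightarrow(1)$: extracting the ring-theoretic structure (a product of local rings, up to Morita equivalence) from the vanishing of $\Ext^1$ between distinct simples. Because we work over arbitrary right artinian rings rather than finite-dimensional algebras, care is needed in setting up the Ext-quiver and in the idempotent argument passing to the basic/local form; by contrast the socle computations and the product-decomposition lemma are routine. It is also worth noting that the socle-free class $\FF_0$ is genuinely needed to force right artinianness, since the $\Ext^1$-vanishing between distinct simples can hold for non-artinian rings (for instance $\mathbb{Z}$), which are detected only by $\FF_0$.
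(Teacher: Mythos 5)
Your proof is correct, and its overall architecture matches the paper's: a cycle of implications, the reduction to a single right artinian local ring for $(1)\Rightarrow(3)$ followed by the same trick of realizing the unique simple $S$ as the image of the composite $M\twoheadrightarrow S\hookrightarrow M$ and invoking $\Filt(S)=\mod\Lambda$, and a two-step argument for $(2)\Rightarrow(1)$ whose first step uses exactly the paper's torsion-free class of socle-zero modules (the paper writes it as $\{F \mid \Hom_\Lambda(S,F)=0 \text{ for all simple } S\}$ and derives artinianness by building a composition series of $\Lambda_\Lambda$ rather than by letting the socle series stabilize --- a cosmetic difference). Where you genuinely diverge is the second step: to kill $\Ext^1_\Lambda(S_i,S_j)$ for $i\neq j$, the paper takes the torsion-free closure $\FFF(M)$ of the middle term, applies its Lemma \ref{lem:torf-closure} to get $S_i\in\Filt(\Sub M)$, hence $S_i\in\Sub M$, and then splits the sequence via Jordan--H\"older and Schur's lemma; you instead introduce the bespoke torsion-free classes $\FF_T=\{X\mid \operatorname{soc}X\in\add T\}$ and observe that the middle term of a nonsplit extension has simple socle $T$ while its quotient $S$ does not lie in $\FF_T$. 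Your route is more elementary and self-contained (it does not need the $\Filt(\Sub-)$ description of torsion-free closures at all), at the cost of verifying the socle computations for $\FF_T$; the paper's route reuses the Section 2 machinery it has already built. The step you flag as the main obstacle --- passing from $\Ext^1$-vanishing between non-isomorphic simples to the Morita statement via the Ext-quiver and block decomposition --- is treated as standard and left unproved in the paper as well (it simply says ``it suffices to prove $\Ext^1_\Lambda(S_i,S_j)=0$''), so your sketch there is at the same level of rigor as the original.
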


Most of the aforementioned results are based on the following general statement regarding IE-closed subcategories, which is valid for any abelian category and is of interest on its own.
\begin{theoremi}[= Corollary \ref{cor:ie-as-inter}]
  Let $\AA$ be an abelian category and $\CC$ a subcategory of $\AA$. Then the following conditions are equivalent.
  \begin{enumerate}
    \item $\CC$ is IE-closed.
    \item There exist a torsion class $\TT$ and a torsion-free class $\FF$ such that $\CC = \TT \cap \FF$.
  \end{enumerate}
\end{theoremi}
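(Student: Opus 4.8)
The implication (2) $\Rightarrow$ (1) will be the easy half: if $\CC = \TT \cap \FF$ for a torsion class $\TT$ and a torsion-free class $\FF$, then $\CC$ is closed under extensions because $\TT$ and $\FF$ both are, and if $f \colon C_1 \to C_2$ has $C_1, C_2 \in \CC$ then $\im f$ is a quotient of $C_1 \in \TT$, so $\im f \in \TT$, and a subobject of $C_2 \in \FF$, so $\im f \in \FF$; hence $\im f \in \CC$.

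For (1) $\Rightarrow$ (2), the plan is to take the smallest possible choices. Set $\TT := \Filt(\Fac\CC)$ and $\FF := \Filt(\Sub\CC)$, and use the standard facts that these are, respectively, the smallest torsion class and the smallest torsion-free class containing $\CC$; in particular $\FF$ is closed under subobjects (intersecting a filtration with a subobject keeps the subquotients inside $\Sub\CC$). Since $\CC \subseteq \TT \cap \FF$ is clear, everything reduces to proving $\TT \cap \FF \subseteq \CC$. The one place where the image-closedness of $\CC$ is used is the preliminary remark that $\CC = \Fac\CC \cap \Sub\CC$: indeed, if $C_1 \twoheadrightarrow X \hookrightarrow C_2$ with $C_1, C_2 \in \CC$, then $X$ is the image of the composite $C_1 \to C_2$, so $X \in \CC$, while the reverse inclusion is trivial.

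The heart of the argument is an induction on the length $n$ of an $\Fac\CC$-filtration of a given $X \in \TT \cap \FF$. For the base case, where $X \in \Fac\CC$ already, I would run a nested induction on the length $m$ of a $\Sub\CC$-filtration $0 = Z_0 \subseteq \cdots \subseteq Z_m = X$: the quotient $X/Z_1$ still lies in $\Fac\CC$ and carries a $\Sub\CC$-filtration one step shorter, so $X/Z_1 \in \CC$ by induction; choosing a monomorphism $Z_1 \hookrightarrow C' \in \CC$ and forming the pushout of $Z_1 \hookrightarrow X$ along it gives an object $\widehat X$ that is an extension of $X/Z_1 \in \CC$ by $C' \in \CC$, hence $\widehat X \in \CC$, and $X \hookrightarrow \widehat X$ shows $X \in \Sub\CC$, so $X \in \Fac\CC \cap \Sub\CC = \CC$. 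For the inductive step $n \ge 2$, let $Y_{n-1} \subseteq X$ be the codimension-one term of the $\Fac\CC$-filtration; then $Y_{n-1} \in \FF$ since torsion-free classes are closed under subobjects, and $Y_{n-1}$ has an $\Fac\CC$-filtration of length $n-1$, so $Y_{n-1} \in \CC$ by induction. Picking an epimorphism $D \twoheadrightarrow X/Y_{n-1}$ with $D \in \CC$ and forming the pullback $\widetilde X := X \times_{X/Y_{n-1}} D$ gives an extension of $D \in \CC$ by $Y_{n-1} \in \CC$, so $\widetilde X \in \CC$, and the epimorphism $\widetilde X \twoheadrightarrow X$ shows $X \in \Fac\CC$; the base case then yields $X \in \CC$. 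Since $\TT \cap \FF = \bigcup_n\big(\Filt_{\le n}(\Fac\CC) \cap \FF\big)$, this completes the reverse inclusion.

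The main obstacle I anticipate is precisely that torsion classes are not closed under subobjects and torsion-free classes are not closed under quotients, so one cannot naively strip off a layer of a filtration and apply induction. The device that circumvents it is to always strip off a layer that by itself embeds into (dually, surjects from) an object of $\CC$, and to complete it to $\CC$ by a pushout along a monomorphism into a $\CC$-object (dually, a pullback along an epimorphism from a $\CC$-object), invoking extension-closedness; the identity $\CC = \Fac\CC \cap \Sub\CC$ is then what upgrades ``lies in $\Fac\CC$ and in $\FF$'' (or its dual) to genuine membership in $\CC$. I expect the bookkeeping of which filtration layer to peel at each step, and the verification that the pushout and pullback really produce the claimed short exact sequences, to be the only technical points.
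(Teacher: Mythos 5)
Your proof is correct and takes essentially the same route as the paper: the easy direction is identical, and for (1) $\Rightarrow$ (2) you take $\TT = \Filt(\Fac\CC)$ and $\FF = \Filt(\Sub\CC)$ and prove $\TT \cap \FF \subseteq \CC$, which is exactly Theorem \ref{thm:ie-as-inter-of-closure} in the paper (the paper defers this to \cite[Lemma 4.23]{eno-from-tors}, and the double induction with the pullback/pushout completions and the identity $\CC = \Fac\CC \cap \Sub\CC$ that you spell out is precisely that argument). In effect you have supplied the details the paper outsources; the only point to make explicit in a final writeup is the (routine, dual-of-Lemma \ref{lem:tors-closure}) verification that $\Filt(\Sub\CC)$ is closed under subobjects.
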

This result generalizes \cite[Lemma 4.23]{eno-from-tors}, where $\AA$ is assumed to be an abelian length category.

\addtocontents{toc}{\SkipTocEntry}
\subsection*{Conventions and notation}
By a \emph{subcategory} of an additive category $\CC$, we always mean a full additive subcategory that is closed under isomorphisms.
For a ring $\Lambda$, we denote the category of finitely generated right $\Lambda$-modules by $\mod\Lambda$ (which is abelian if $\Lambda$ is right noetherian).

\section{IE-closed subcategories via torsion/torsion-free classes}
Throughout this section, we always denote by $\AA$ an abelian category, without assuming any restrictions such as being an abelian length category.
In this section, we focus on IE-closed subcategories of general abelian categories and their relationship with torsion and torsion-free classes. To begin, let us recall the definitions of various subcategories of an abelian category.

To begin, let us recall several subcategories being ``closed under specific operations" within an abelian category $\AA$.
\begin{definition}\label{def:basic-def}
  Let $\CC$ be a subcategory of $\AA$.
  \begin{enumerate}
    \item $\CC$ is \emph{closed under extensions} if, for every short exact sequence
          \[
            \begin{tikzcd}
              0 \rar & L \rar & M \rar & N \rar & 0,
            \end{tikzcd}
          \]
          in $\AA$ with $L,N \in \CC$, we have $M \in \CC$.
    \item $\CC$ is \emph{closed under quotients (resp. subobjects)} if, for every object $C \in \CC$, every quotient (resp. submodule) of $C$ in $\mod\Lambda$ belongs to $\CC$.
    \item $\CC$ is closed under \emph{images (resp. kernels, cokernels)} if, for every map $\varphi \colon C_1 \to C_2$ with $C_1, C_2 \in \CC$, we have $\im\varphi \in \CC$ (resp. $\ker\varphi\in\CC$, $\coker\varphi\in\CC$).
  \end{enumerate}
\end{definition}

Next, we recall the classes of subcategories discussed in the introduction:

\begin{definition}
  Let $\CC$ be a subcategory of $\AA$.
  \begin{enumerate}
    \item $\CC$ is a \emph{torsion class} (resp. \emph{torsion-free class}) if $\CC$ is closed under quotients and extensions (resp. subobjects and extensions).
    \item $\CC$ is \emph{wide} if $\CC$ is closed under kernels, cokernels, and extensions.
    \item $\CC$ is \emph{ICE-closed} (resp. \emph{IKE-closed}) if $\CC$ is closed under images, cokernels, and extensions (resp. images, kernels, and extensions).
    \item $\CC$ is \emph{IE-closed} if $\CC$ is closed under images and extensions.
  \end{enumerate}
\end{definition}

\subsection{Torsion(-free) classes and torsion pairs}

It is worth mentioning the relationship between torsion(-free) classes and \emph{torsion pairs} in $\AA$. A pair $(\TT, \FF)$ of $\AA$ is called a \emph{torsion pair} if the following conditions are satisfied:
\begin{enumerate}
  \item $\AA(\TT, \FF) = 0$.
  \item For any $X \in \AA$, there exists a short exact sequence
        \[
          \begin{tikzcd}
            0 \rar & T \rar & X \rar & F \rar & 0
          \end{tikzcd}
        \]
        in $\AA$ with $T \in \TT$ and $F \in \FF$.
\end{enumerate}

It is easy to see that if $(\TT, \FF)$ is a torsion pair in $\AA$, then $\TT$ is a torsion class and $\FF$ is a torsion-free class. We refer to a \emph{strong torsion(-free) class} as $\TT$ and $\FF$ when they form a torsion pair $(\TT, \FF)$ in $\AA$. Note that some authors use the terminology \emph{torsion(-free) classes} for strong ones. In some situations, strong and usual torsion(-free) classes are equivalent or related as follows:

\begin{itemize}
  \item If $\AA$ is noetherian (meaning that every object is noetherian), then strong torsion classes coincide with torsion classes.
  \item Dually, if $\AA$ is artinian, then strong torsion-free classes coincide with torsion-free classes.
  \item As for ``large" categories, suppose that $\AA$ is well-powered and cocomplete. Then a subcategory $\TT$ of $\AA$ is a strong torsion class if and only if $\TT$ is a torsion class and closed under coproducts. We can also dualize this statement for (strong) torsion-free classes.
\end{itemize}

The proofs of these facts are well-known and standard, so we omit them. Furthermore, we note the following proposition which characterizes artinian rings in terms of torsion classes and torsion pairs:
\begin{proposition}
  Let $\Lambda$ be a right noetherian ring. Then the following conditions are equivalent:
  \begin{enumerate}
    \item $\Lambda$ is a right artinian ring.
    \item Every torsion-free class $\FF$ in $\mod\Lambda$ is a strong torsion class.
  \end{enumerate}
\end{proposition}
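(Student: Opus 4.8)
The plan is to prove the two implications of the proposition separately; the implication $(1)\Rightarrow(2)$ is immediate from the facts collected just above, and the substance lies in $(2)\Rightarrow(1)$. For $(1)\Rightarrow(2)$: if $\Lambda$ is right artinian then, by the Hopkins--Levitzki theorem, every finitely generated right $\Lambda$-module has a composition series, so $\mod\Lambda$ is an artinian abelian category; and in an artinian abelian category strong torsion-free classes coincide with torsion-free classes, so every torsion-free class $\FF$ of $\mod\Lambda$ is a strong torsion-free class, which is exactly $(2)$.

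For $(2)\Rightarrow(1)$ I would argue contrapositively: assuming $\Lambda$ is right noetherian but not right artinian, I will exhibit a torsion-free class of $\mod\Lambda$ which is not a strong torsion-free class, i.e.\ which is not the torsion-free part of any torsion pair. First, $\Lambda_\Lambda$ is a noetherian but non-artinian module, hence has infinite length; set $X:=\Lambda_\Lambda$. Let $\fl\Lambda$ denote the subcategory of $\mod\Lambda$ consisting of the modules of finite length; since finite length passes to submodules and quotients and is additive along short exact sequences, $\fl\Lambda$ is a Serre subcategory, in particular a torsion-free class. The key step is to show that $\fl\Lambda$ has no companion torsion class. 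Suppose for contradiction that $(\TT,\fl\Lambda)$ were a torsion pair, with canonical sequence $0\to tX\to X\to X/tX\to 0$, so that $X/tX\in\fl\Lambda$. For any submodule $U\subseteq X$ with $X/U\in\fl\Lambda$, the composite $tX\hookrightarrow X\twoheadrightarrow X/U$ is a morphism from an object of $\TT$ to an object of $\fl\Lambda$, hence zero; therefore $tX\subseteq U$, and so $tX$ would be the \emph{smallest} submodule of $X$ with finite-length quotient. But no such smallest submodule exists: given any submodule $U\subseteq X$ with $X/U\in\fl\Lambda$, the module $U$ has infinite length (otherwise so would $X$), so $U$ is nonzero and finitely generated and hence has a maximal submodule $V\subsetneq U$; then $X/V$ fits into a short exact sequence $0\to U/V\to X/V\to X/U\to 0$ whose outer terms lie in $\fl\Lambda$ ($U/V$ being simple), so $X/V\in\fl\Lambda$ while $V\subsetneq U$, contradicting the minimality of $tX$. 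Hence $\fl\Lambda$ is not a strong torsion-free class, so $(2)$ fails.

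I expect the only point requiring care to be the torsion-pair computation identifying $tX$ (if it existed) with the least submodule of $X$ having finite-length quotient --- this is precisely where the Hom-orthogonality $\mod\Lambda(\TT,\fl\Lambda)=0$ of a torsion pair is used --- together with the elementary observation that a right noetherian ring which is not right artinian already admits a finitely generated module of infinite length. The remaining steps (that $\fl\Lambda$ is Serre, and the maximal-submodule trick) are routine, and no classification results or properties of $\Spec R$ are needed here.
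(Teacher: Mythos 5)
Your proof is correct and follows essentially the same route as the paper: both directions hinge on the observation that $\fl\Lambda$ is a Serre (hence torsion-free) subcategory, and both derive the contradiction in $(2)\Rightarrow(1)$ from the Hom-orthogonality of the putative torsion pair $(\TT,\fl\Lambda)$ combined with the existence of maximal submodules in nonzero finitely generated modules. The only cosmetic difference is that the paper shows $\TT=0$ directly (any nonzero $M\in\TT$ surjects onto a simple module lying in $\fl\Lambda$) and then concludes $\fl\Lambda=\mod\Lambda$, whereas you argue contrapositively by showing the torsion subobject $t\Lambda_\Lambda$ would have to be a smallest submodule with finite-length quotient, which cannot exist.
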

\begin{proof}
  (1) $\Rightarrow$ (2): This direction follows immediately from the fact that $\mod \Lambda$ is an artinian abelian category.

  (2) $\Rightarrow$ (1):
  Let $\fl\Lambda$ be the subcategory of $\mod\Lambda$ consisting of finitely generated $\Lambda$-modules with finite length. Since $\fl\Lambda$ is a Serre subcategory of $\mod\Lambda$, it is also a torsion-free class in $\mod\Lambda$. Hence, there exists a torsion class $\TT$ in $\mod\Lambda$ such that $(\TT, \fl \Lambda)$ is a torsion pair in $\mod\Lambda$ by (2).

  We claim that $\TT = 0$. Suppose for a contradiction that $\TT$ contains a non-zero module $M$. Since $M$ is finitely generated, it has at least one maximal submodule $N$. There exists a surjective homomorphism $\varphi: M \to M/N$ to a simple module $M/N$. However, since $M/N$ belongs to $\fl\Lambda$, we have $\Hom_\Lambda(M, M/N) = 0$ by the torsion pair property. This contradicts the existence of the non-zero surjective homomorphism $\varphi$, so we must have $M=0$ and thus $\TT=0$.

  Therefore, we have shown that $(0, \fl\Lambda)$ is a torsion pair in $\mod \Lambda$, which implies that $\fl\Lambda = \mod\Lambda$. Hence, $\Lambda_\Lambda$ has finite length, and so $\Lambda$ is right artinian.
\end{proof}

\subsection{Description of torsion(-free) closures and IE-closed subcategories}
We will now introduce the concept of torsion(-free) closure and provide an explicit constructive description for them.
Let $\CC$ be a collection of objects in an abelian category $\AA$. We denote by $\TTT(\CC)$ (resp. $\FFF(\CC)$) the smallest torsion class containing $\CC$ (resp. the smallest torsion-free class containing $\CC$).
To see the existence of $\TTT(\CC)$, we can consider the intersection of all torsion classes in $\AA$ that contain $\CC$. It is easy to see that this intersection is again a torsion class containing $\CC$ and hence is the smallest torsion class containing $\CC$.

However, for our purposes, it is important to have a more explicit and constructive description of $\TTT(\CC)$. To achieve this, we introduce the following notation.
\begin{definition}
  Let $\CC$ be a collection of objects in $\AA$.
  \begin{enumerate}
    \item $\Fac \CC$ denotes the subcategory of $\AA$ consisting of $X$ such that there exists a surjection $C \defl X$ in $\AA$ from some object $C \in \CC$.
    \item $\Sub \CC$ denotes the subcategory of $\AA$ consisting of $X$ such that there exists an injection $X \hookrightarrow C$ in $\AA$ to some object $C \in \CC$.
    \item $\Filt \CC$ denotes the subcategory of $\AA$ consisting of $X$ such that there is a finite sequence of subobjects of $X$:
          \[
            0 = X_0 \subseteq X_1 \subseteq X_2 \cdots \subseteq X_n = X,
          \]
          such that $X_i/X_{i-1} \in \CC$ for each $i$.
          In this case, we say that \emph{$X$ has a $\CC$-filtration of length $n$}.
  \end{enumerate}
\end{definition}
We can provide a more constructive description of $\TTT(\CC)$ as follows. While the case of $\mod\Lambda$ for a finite-dimensional $k$-algebra over a field $k$ has been proven in \cite[Lemma 3.1]{MS}, the same proof applies to our setting. We include the proof below for the convenience of the reader, as it is not lengthy.
\begin{lemma}\label{lem:tors-closure}
  Let $\CC$ be a collection of objects in $\AA$. Then we have the equality:
  \[
    \TTT(\CC) = \Filt (\Fac \CC).
  \]
\end{lemma}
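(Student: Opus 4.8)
The plan is to prove the two inclusions $\TTT(\CC) \supseteq \Filt(\Fac\CC)$ and $\TTT(\CC) \subseteq \Filt(\Fac\CC)$ separately, the first being nearly formal and the second requiring us to check that $\Filt(\Fac\CC)$ is itself a torsion class.

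For the inclusion $\TTT(\CC) \supseteq \Filt(\Fac\CC)$: since $\TTT(\CC)$ is a torsion class it is closed under quotients, so $\Fac\CC \subseteq \TTT(\CC)$ because every object of $\Fac\CC$ is a quotient of an object of $\CC \subseteq \TTT(\CC)$. Then, since $\TTT(\CC)$ is closed under extensions, an easy induction on the length of a $(\Fac\CC)$-filtration shows $\Filt(\Fac\CC) \subseteq \TTT(\CC)$: if $0 = X_0 \subseteq \cdots \subseteq X_n = X$ has all subquotients in $\Fac\CC$, then $X_{n-1} \in \TTT(\CC)$ by induction and $X/X_{n-1} \in \Fac\CC \subseteq \TTT(\CC)$, so $X \in \TTT(\CC)$ by the extension-closedness applied to $0 \to X_{n-1} \to X \to X/X_{n-1} \to 0$.

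For the reverse inclusion, it suffices to show that $\DD := \Filt(\Fac\CC)$ is a torsion class containing $\CC$, since $\TTT(\CC)$ is the smallest such. Containment of $\CC$ is clear ($\CC \subseteq \Fac\CC \subseteq \Filt(\Fac\CC)$, using identity surjections and length-one filtrations). Closure of $\DD$ under extensions is immediate: concatenating a $(\Fac\CC)$-filtration of $L$ with (the preimage in $M$ of) a $(\Fac\CC)$-filtration of $N$ yields one for $M$. The real content is closure of $\DD$ under quotients, and here it is enough to show that $\Fac\CC$ is closed under quotients (then a quotient $X \defl Y$ restricts the filtration $0 = X_0 \subseteq \cdots \subseteq X_n = X$ to $0 = \bar X_0 \subseteq \cdots \subseteq \bar X_n = Y$ via images, and each subquotient $\bar X_i/\bar X_{i-1}$ is a quotient of $X_i/X_{i-1} \in \Fac\CC$, hence in $\Fac\CC$). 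But $\Fac\CC$ is obviously closed under quotients: if $X \in \Fac\CC$ via $C \defl X$ and $X \defl Y$, then the composite $C \defl Y$ witnesses $Y \in \Fac\CC$.

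The main obstacle — though it is a minor one — is the quotient-of-a-filtration step: one must verify that intersecting a filtration of $X$ with the kernel of a surjection $X \defl Y$ and passing to images in $Y$ really produces a filtration whose subquotients are quotients of the original subquotients. This is the standard argument that torsion classes (equivalently, quotient-closed, extension-closed subcategories) can be tested on filtrations, and it goes through verbatim in any abelian category since it only uses the second isomorphism theorem; no length or finiteness hypothesis on $\AA$ is needed. $\qedb$
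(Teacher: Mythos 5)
Your proof is correct and follows essentially the same strategy as the paper: both inclusions are handled by showing that $\Filt(\Fac\CC)$ is the smallest torsion class containing $\CC$, with the only real content being closure of $\Filt(\Fac\CC)$ under quotients. The paper verifies that closure by induction on the length of a $(\Fac\CC)$-filtration via an exact commutative diagram, whereas you push the entire filtration forward along the surjection and invoke the second isomorphism theorem; the two verifications are interchangeable and both valid in an arbitrary abelian category.
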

\begin{proof}
  By construction, if a torsion class $\TT$ contains $\CC$, then it must also contain $\Fac \CC$, since $\TT$ is closed under quotients, and then $\Filt(\Fac \CC)$, since $\TT$ is closed under extensions. Therefore, it is sufficient to show that $\Filt(\Fac \CC)$ is a torsion class.

  It is evident that $\Filt(\Fac \CC)$ is closed under extensions. Hence, we need to establish that it is also closed under quotients. We will prove the following claim by induction on $n$: if $X\in\Filt(\Fac\CC)$ has a $(\Fac\CC)$-filtration length $n$, then every quotient of $X$ belongs to $\Filt(\Fac\CC)$.

  Suppose that $X\in\Filt(\Fac\CC)$ has a $(\Fac\CC)$-filtration length $n$. If $n=0$, then $X=0$, and every quotient of $X$ trivially belongs to $\Filt(\Fac\CC)$. Next, suppose that $n>0$. Then there exists a short exact sequence
  \[
    \begin{tikzcd}
      0 \rar & X' \rar["\iota"] & X \rar["\pi"] & X'' \rar & 0
    \end{tikzcd}
  \]
  in $\AA$, such that $X'\in\Fac\CC$ and $X''\in\Filt(\Fac\CC)$ has a $(\Fac\CC)$-filtration of length $n-1$. Now, consider any quotient $p\colon X\twoheadrightarrow Y$. We obtain the following exact commutative diagram by considering the image of $p\iota\colon X'\to Y$:
  \[
    \begin{tikzcd}
      0 \rar & X' \rar["\iota"] \dar[twoheadrightarrow] & X
      \rar["\pi"] \dar[twoheadrightarrow, "p"] & X'' \rar \dar["f"] & 0 \\
      0 \rar & \im (p \iota) \rar["\iota'"'] & Y \rar["\pi'"'] & B \rar & 0
    \end{tikzcd}
  \]
  Here, $B$ is the cokernel of $\iota'$. Thus, $f$ is a surjection since $\pi'p$ is a surjection. Additionally, $\im (p \iota)\in\Fac\CC$ as $X'\in\Fac\CC$ and $\Fac\CC$ is closed under quotients. By the induction hypothesis, $B$ belongs to $\Filt(\Fac\CC)$ since $B$ is a quotient of $X''$. Therefore, both $\im (p \iota)$ and $B$ belong to $\Filt(\Fac\CC)$, and we conclude that $Y\in\Filt(\Fac\CC)$. This completes the proof.
\end{proof}

Since we are working with general abelian categories, we can dualize the previous lemma to obtain the following description of the torsion-free closure.
\begin{lemma}\label{lem:torf-closure}
  Let $\CC$ be a collection of objects in $\AA$. Then we have the equality:
  \[
    \FFF(\CC) = \Filt (\Sub \CC).
  \]
\end{lemma}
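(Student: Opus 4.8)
The plan is to deduce this directly from Lemma \ref{lem:tors-closure} by passing to the opposite category. Working in $\AA^{\op}$, a short exact sequence $0 \to L \to M \to N \to 0$ in $\AA$ becomes $0 \to N \to M \to L \to 0$, so ``closed under extensions'' is a self-dual condition, while ``closed under quotients'' and ``closed under subobjects'' are interchanged. Consequently a torsion-free class in $\AA$ is precisely a torsion class in $\AA^{\op}$, and therefore the smallest torsion-free class of $\AA$ containing $\CC$ equals the smallest torsion class of $\AA^{\op}$ containing $\CC$, i.e.\ $\FFF(\CC) = \TTT_{\AA^{\op}}(\CC)$.

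Next I would check that the two building-block operations dualize as expected. A monomorphism $X \hookrightarrow C$ in $\AA$ is an epimorphism $C \defl X$ in $\AA^{\op}$, so $\Sub_\AA \CC = \Fac_{\AA^{\op}} \CC$. For $\Filt$, the key point is that $\Filt(\DD)$ is nothing but the smallest subcategory containing $\DD$ and closed under extensions (a $\DD$-filtration is exactly an iterated extension of objects of $\DD$), and since closure under extensions is self-dual, $\Filt_\AA(\DD) = \Filt_{\AA^{\op}}(\DD)$ for any collection $\DD$. Combining these observations with Lemma \ref{lem:tors-closure} applied in $\AA^{\op}$ yields
\[
  \FFF(\CC) = \TTT_{\AA^{\op}}(\CC) = \Filt_{\AA^{\op}}(\Fac_{\AA^{\op}} \CC) = \Filt_\AA(\Sub_\AA \CC),
\]
which is the desired equality.

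Since the argument is a formal dualization, there is essentially no obstacle; the only care needed is the bookkeeping of which closure conditions are self-dual and which are interchanged, which I have indicated above. Alternatively, one can simply rerun the proof of Lemma \ref{lem:tors-closure} with all arrows reversed, replacing $\Fac$ by $\Sub$, quotients by subobjects, and the image of $p\iota$ by the corresponding kernel/subobject construction, but invoking the opposite category is cleaner and avoids repeating the diagram chase.
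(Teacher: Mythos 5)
Your proof is correct and takes essentially the same approach as the paper: the paper gives no written argument for this lemma, simply remarking that it is obtained by dualizing Lemma \ref{lem:tors-closure}, which is exactly the formal passage to $\AA^{\op}$ that you carry out. The bookkeeping you supply (extensions and $\Filt$ are self-dual, $\Sub$ and $\Fac$ are interchanged, torsion-free classes become torsion classes) is precisely the content the paper leaves implicit.
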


We can now prove a key observation, which is of central importance to our work.
\begin{theorem}\label{thm:ie-as-inter-of-closure}
  Let $\AA$ be an abelian category and let $\CC$ be an IE-closed subcategory of $\AA$. Then we have the equality $\CC = \TTT(\CC) \cap \FFF(\CC)$.
\end{theorem}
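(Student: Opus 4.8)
The plan is to establish the nontrivial inclusion $\TTT(\CC)\cap\FFF(\CC)\subseteq\CC$; the reverse inclusion is immediate since $\CC\subseteq\TTT(\CC)$ and $\CC\subseteq\FFF(\CC)$ by construction of the closures. Throughout I will use the explicit descriptions $\TTT(\CC)=\Filt(\Fac\CC)$ and $\FFF(\CC)=\Filt(\Sub\CC)$ from Lemmas \ref{lem:tors-closure} and \ref{lem:torf-closure}, together with the elementary facts that $\Fac\CC$ is closed under quotients and $\FFF(\CC)$, being a torsion-free class, is closed under subobjects. The argument runs in three stages of increasing generality. The base observation is that $\Fac\CC\cap\Sub\CC=\CC$: if $C\defl X$ and $X\infl C'$ with $C,C'\in\CC$, then $X$ is the image of the composite $C\to C'$, hence $X\in\CC$ because $\CC$ is closed under images. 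This is the only point where image-closedness (rather than merely extension-closedness) enters.

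The second stage is the statement $\Fac\CC\cap\FFF(\CC)=\CC$, proved by induction on the length $m$ of a $\Sub\CC$-filtration $0=Y_0\subseteq Y_1\subseteq\cdots\subseteq Y_m=X$ of an object $X\in\Fac\CC\cap\FFF(\CC)$. For $m\le 1$ this is the base observation. For $m\ge 2$, peel off the bottom step: $Y_1\in\Sub\CC$, while $X/Y_1$ inherits a $\Sub\CC$-filtration of length $m-1$ and is a quotient of $X$, hence lies in $\Fac\CC$; the induction hypothesis gives $X/Y_1\in\CC$. Now pick an embedding $Y_1\infl C'$ with $C'\in\CC$ and push out the short exact sequence $0\to Y_1\to X\to X/Y_1\to 0$ along it: one obtains a short exact sequence $0\to C'\to E\to X/Y_1\to 0$ together with a monomorphism $X\infl E$ (pushouts preserve monomorphisms), and $E\in\CC$ by extension-closedness. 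Thus $X\in\Sub\CC$, and combined with $X\in\Fac\CC$ the base observation gives $X\in\CC$.

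The third stage is the theorem. Given $X\in\TTT(\CC)\cap\FFF(\CC)$, fix a $\Fac\CC$-filtration $0=X_0\subseteq X_1\subseteq\cdots\subseteq X_n=X$ and induct on $n$. The truncation $X_{n-1}$ has a $\Fac\CC$-filtration of length $n-1$, so $X_{n-1}\in\TTT(\CC)$, and since $\FFF(\CC)$ is closed under subobjects also $X_{n-1}\in\FFF(\CC)$; the induction hypothesis gives $X_{n-1}\in\CC$. The top subquotient $X/X_{n-1}$ lies in $\Fac\CC$, so there is $C\in\CC$ with $C\defl X/X_{n-1}$; pulling back $0\to X_{n-1}\to X\to X/X_{n-1}\to 0$ along this surjection yields a short exact sequence $0\to X_{n-1}\to X'\to C\to 0$ with $X'\in\CC$ (extension-closedness) together with a surjection $X'\defl X$ (pullbacks preserve epimorphisms). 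Hence $X\in\Fac\CC$, and since also $X\in\FFF(\CC)$, the second stage gives $X\in\CC$, completing the induction.

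The conceptual obstacle — and what the pushout and pullback maneuvers are designed to defeat — is that $\TTT(\CC)$ is not closed under subobjects and $\FFF(\CC)$ not under quotients, while $\CC$ itself is a priori closed under neither: a naive induction that merely peels off a piece of one filtration leaves an uncontrollable cokernel or subobject. The key point is that once the \emph{complementary} term of the short exact sequence is already known to lie in $\CC$, one may enlarge $X$ (by pushout, respectively pullback) to an object of $\CC$ that still witnesses $X\in\Sub\CC$ (respectively $X\in\Fac\CC$), after which image-closedness closes the loop. I expect the only delicate points to be the standard verifications that the relevant pushout/pullback squares produce the asserted monomorphisms/epimorphisms and short exact sequences, and the routine bookkeeping of induced filtrations on subquotients.
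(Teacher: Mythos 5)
Your proof is correct, and it follows essentially the same route as the argument the paper relies on (the paper itself defers to \cite[Lemma 4.23]{eno-from-tors}, whose proof likewise runs an induction on the $\Filt(\Fac\CC)$- and $\Filt(\Sub\CC)$-filtrations, uses pullback/pushout to enlarge $X$ to an object of $\CC$ witnessing $X\in\Fac\CC$ and $X\in\Sub\CC$, and closes with image-closedness via $\Fac\CC\cap\Sub\CC=\CC$). The pushout/pullback facts you invoke (monos and epis are preserved, with the expected induced short exact sequences) are standard in any abelian category, so nothing further is needed.
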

\begin{proof}
  The proof of this claim has already been established in \cite[Lemma 4.23]{eno-from-tors} under the assumption that $\AA$ is an abelian length category. However, since the proof therein only depends on the descriptions of $\TTT(\CC)$ and $\FFF(\CC)$ given in Lemmas \ref{lem:tors-closure} and \ref{lem:torf-closure}, respectively, the same proof applies in this general setting. Therefore, we avoid repeating the proof here.
\end{proof}
Since all torsion classes and torsion-free classes are IE-closed, and IE-closed subcategories are closed under intersections, using Theorem \ref{thm:ie-as-inter-of-closure}, we immediately obtain the following equivalent condition for a subcategory to be IE-closed. This extends \cite[Proposition 2.3]{ES2}, where only the case $\AA = \mod\Lambda$ for an artin algebra $\Lambda$ is considered.

\begin{corollary}\label{cor:ie-as-inter}
  Let $\CC$ be a subcategory of $\AA$. The following conditions are equivalent:
  \begin{enumerate}
    \item $\CC$ is an IE-closed subcategory of $\AA$.
    \item There exist a torsion class $\TT$ and a torsion-free class $\FF$ such that $\CC = \TT \cap \FF$.
  \end{enumerate}
\end{corollary}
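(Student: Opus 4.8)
The plan is to establish the two implications separately; the non-trivial direction, $(1)\Rightarrow(2)$, is essentially immediate from Theorem \ref{thm:ie-as-inter-of-closure}, so the real content has already been carried out upstream.

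For $(1)\Rightarrow(2)$: given an IE-closed subcategory $\CC$ of $\AA$, I would simply take $\TT := \TTT(\CC)$ and $\FF := \FFF(\CC)$. By definition these are respectively a torsion class and a torsion-free class of $\AA$, and Theorem \ref{thm:ie-as-inter-of-closure} yields exactly $\CC = \TTT(\CC)\cap\FFF(\CC) = \TT\cap\FF$. No further argument is required for this direction.

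For $(2)\Rightarrow(1)$: I would first record that every torsion class $\TT$ is IE-closed. Indeed, $\TT$ is closed under extensions by definition, and for any morphism $f\colon C_1 \to C_2$ with $C_1,C_2 \in \TT$ the image $\im f$ is a quotient of $C_1$, hence belongs to $\TT$ since $\TT$ is closed under quotients; thus $\TT$ is closed under images. Dually, every torsion-free class $\FF$ is IE-closed, using that $\im f$ is a subobject of $C_2$. Next I would observe that an arbitrary intersection of IE-closed subcategories of $\AA$ is again IE-closed: it is clearly a full additive subcategory closed under isomorphisms, and if $f\colon C_1 \to C_2$ is a morphism with $C_1,C_2$ in the intersection, then $\im f$ lies in each of the IE-closed subcategories in question, hence in the intersection; the same reasoning applies to short exact sequences, giving closure under extensions. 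Applying this to $\CC = \TT\cap\FF$ shows $\CC$ is IE-closed, completing the proof.

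As for difficulties: there are essentially none at this level. The corollary is a clean packaging of Theorem \ref{thm:ie-as-inter-of-closure} together with the two routine observations that torsion and torsion-free classes are IE-closed and that IE-closedness is stable under intersections. The only place genuine work occurs is in the proof of Theorem \ref{thm:ie-as-inter-of-closure} (i.e.\ \cite[Lemma 4.23]{eno-from-tors}), which in turn rests on the constructive descriptions $\TTT(\CC) = \Filt(\Fac\CC)$ and $\FFF(\CC) = \Filt(\Sub\CC)$ from Lemmas \ref{lem:tors-closure} and \ref{lem:torf-closure}.
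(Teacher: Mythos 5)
Your proposal is correct and follows exactly the paper's (implicit) argument: the forward direction is Theorem \ref{thm:ie-as-inter-of-closure} applied with $\TT = \TTT(\CC)$ and $\FF = \FFF(\CC)$, and the converse uses that torsion and torsion-free classes are IE-closed and that IE-closedness passes to intersections. Nothing to add.
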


\section{Applications}
We are now ready to apply our theory to the study of commutative noetherian rings. In what follows, we will denote by $R$ a commutative noetherian ring.

Firstly, we can prove the main result of this paper, which completely characterizes I(K)E-closed subcategories of $\mod R$ and provides an affirmative answer to Question \ref{q:ike-is-torf}.
\begin{theorem}\label{thm:main}
  Let $\CC$ be a subcategory of $\mod R$. Then the following conditions are equivalent:
  \begin{enumerate}
    \item $\CC$ is a torsion-free class.
    \item $\CC$ is an IKE-closed subcategory.
    \item $\CC$ is an IE-closed subcategory.
  \end{enumerate}
\end{theorem}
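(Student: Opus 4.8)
The implications $(1)\Rightarrow(2)\Rightarrow(3)$ are immediate: a torsion-free class is closed under submodules, hence under kernels and images, and of course under extensions, so it is IKE-closed; and an IKE-closed subcategory is in particular IE-closed. The entire content is the implication $(3)\Rightarrow(1)$, and here the plan is to exploit Corollary \ref{cor:ie-as-inter}. Given an IE-closed subcategory $\CC$ of $\mod R$, write $\CC = \TT \cap \FF$ for a torsion class $\TT$ and a torsion-free class $\FF$. Since $\FF$ is already a torsion-free class, it suffices to show that $\CC$ is closed under submodules, and for that it is enough to show $\TT \cap \FF$ is closed under submodules. The key point will be to use the strong structural constraints on torsion classes in $\mod R$ provided by Theorem \ref{thm:SW}: a torsion class in $\mod R$ is automatically Serre, hence in particular closed under submodules. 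Granting that, if $X \in \CC = \TT\cap\FF$ and $X' \subseteq X$, then $X' \in \TT$ because $\TT$ is Serre (so closed under submodules) and $X' \in \FF$ because $\FF$ is a torsion-free class (so closed under submodules); thus $X' \in \TT\cap\FF = \CC$, and $\CC$ is a torsion-free class.

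So the real work is to invoke Theorem \ref{thm:SW} correctly: the statement $\tors R = \serre R$ says precisely that every torsion class of $\mod R$ is a Serre subcategory, and a Serre subcategory is by definition closed under subobjects. This is the structural input that makes the commutative case so rigid. I would therefore structure the proof as: (i) recall $(1)\Rightarrow(2)\Rightarrow(3)$ trivially; (ii) for $(3)\Rightarrow(1)$, apply Corollary \ref{cor:ie-as-inter} to get $\CC = \TT\cap\FF$; (iii) apply Theorem \ref{thm:SW} to conclude $\TT$ is Serre, hence closed under submodules; (iv) observe that an intersection of two subcategories each closed under submodules is closed under submodules, and that $\CC$ inherits closure under extensions from $\TT$ and $\FF$ (or directly, being IE-closed); (v) conclude $\CC$ is a torsion-free class.

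The main obstacle is conceptual rather than technical: one must recognize that the problem reduces, via the general abelian-categorical decomposition $\CC = \TT\cap\FF$, to the single known fact $\tors R = \serre R$. Once that reduction is in place the remaining steps are formal. A minor point to be careful about is that Corollary \ref{cor:ie-as-inter} is stated for a general abelian category $\AA$, and we are applying it to $\AA = \mod R$, which is abelian since $R$ is noetherian; there is no finiteness or length hypothesis needed, so the application is unproblematic. One should also note that the equivalence in fact shows something slightly stronger along the way — that IKE-closed and IE-closed coincide with torsion-free in $\mod R$ — but this is exactly the three-way equivalence being asserted, and no extra argument beyond the above is required, since $(1)\Rightarrow(2)\Rightarrow(3)\Rightarrow(1)$ closes the loop.
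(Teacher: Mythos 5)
Your proposal is correct and follows essentially the same route as the paper: decompose $\CC = \TT \cap \FF$ via Corollary \ref{cor:ie-as-inter}, invoke Theorem \ref{thm:SW} to see that $\TT$ is Serre (hence closed under submodules, i.e.\ itself a torsion-free class), and conclude that the intersection is a torsion-free class. No gaps.
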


\begin{proof}
  The implications (1) $\Rightarrow$ (2) $\Rightarrow$ (3) are clear, so we only need to prove (3) $\Rightarrow$ (1).

  Let $\CC$ be an IE-closed subcategory of $\mod R$. By Corollary \ref{cor:ie-as-inter}, there exist a torsion class $\TT$ and a torsion-free class $\FF$ of $\mod R$ such that $\CC = \TT \cap \FF$.
  It follows from Stanley--Wang's result (Theorem \ref{thm:SW}) that $\TT$ is a Serre subcategory. In particular, $\TT$ is a torsion-free class since every Serre subcategory of $\AA$ is a torsion-free class. Therefore, $\CC$ is an intersection of two torsion-free classes, namely $\TT$ and $\FF$. This implies that $\CC$ is also a torsion-free class. Hence, the proof is complete.
\end{proof}

\begin{remark}
  We can generalize Theorem \ref{thm:main} to the non-commutative setting as follows. Let $R$ be a commutative noetherian ring, and let $\Lambda$ be a module-finite $R$-algebra. Suppose that $\Lambda_\mathfrak{p}$ is Morita equivalent to a local ring for every $\mathfrak{p} \in \Spec R$. Then every IE-closed subcategory of $\mod \Lambda$ is a torsion-free class. This can be proved by replacing Stanley-Wang's result in the proof with Iyama--Kimura's corresponding result \cite[Proposition 3.19]{IK}, which states that every torsion class is a Serre subcategory in this setting.
\end{remark}

As a consequence of Takahashi's work \cite{takahashi}, which classifies torsion-free classes of $\mod R$ in terms of subsets of $\Spec R$, we obtain the following immediate corollary of our main result.

\begin{corollary}
  The following statements hold.
  \begin{enumerate}
    \item There exists a bijection between $\torf R = \ike R = \ie R$ and the set of subsets of $\Spec R$.
    \item Let $R$ be a local domain with $\dim R = 1$. Then there are precisely four elements in $\torf R = \ike R = \ie R$: the zero subcategory, $\fl R$, the category of torsion-free $R$-modules, and $\mod R$.
  \end{enumerate}
\end{corollary}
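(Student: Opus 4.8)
The plan is to obtain both statements as essentially formal consequences of Theorem~\ref{thm:main} together with Takahashi's classification of torsion-free classes \cite{takahashi}.

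For part~(1), I would first invoke Theorem~\ref{thm:main} to identify $\torf R = \ike R = \ie R$ as a single class, and then quote Takahashi's theorem \cite{takahashi}, which asserts that the assignment $\CC \mapsto \Ass\CC = \bigcup_{X \in \CC}\Ass_R X$ is a bijection from $\torf R$ onto the power set of $\Spec R$, with inverse sending a subset $S \subseteq \Spec R$ to $\CC_S := \{X \in \mod R \mid \Ass_R X \subseteq S\}$. There is nothing further to do here beyond citing these two facts.

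For part~(2), I would argue in two steps. First, since $R$ is a local domain of Krull dimension $1$ with maximal ideal $\mathfrak{m}$, its spectrum is $\Spec R = \{(0), \mathfrak{m}\}$: the zero ideal is prime because $R$ is a domain, $\mathfrak{m}$ is the unique maximal ideal, and a prime strictly between them would force a chain of primes of length $2$, contradicting $\dim R = 1$. Hence $\Spec R$ has exactly two points and four subsets, so part~(1) yields exactly four torsion-free classes, namely the $\CC_S$ with $S \in \{\emptyset, \{\mathfrak{m}\}, \{(0)\}, \Spec R\}$; these are automatically pairwise distinct since $\Ass$ is injective and the four subsets are distinct (note $\mathfrak m \neq (0)$ as $R$ is not a field). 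Second, I would identify each $\CC_S$ explicitly: one has $\CC_\emptyset = 0$ because a nonzero finitely generated module over a noetherian ring has a nonempty set of associated primes; $\CC_{\{\mathfrak{m}\}} = \fl R$ because, for finitely generated $X$, the condition $\Ass_R X \subseteq \{\mathfrak{m}\}$ is equivalent to $\dim X = 0$, i.e.\ to $X$ having finite length; $\CC_{\{(0)\}}$ is the subcategory of torsion-free modules because, $R$ being a domain, $\Ass_R X \subseteq \{(0)\}$ holds exactly when every nonzero element of $X$ has zero annihilator; and $\CC_{\Spec R} = \mod R$ trivially.

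I expect no real obstacle: the only points needing genuine (but routine) verification are the two commutative-algebra identifications $\{X \mid \Ass_R X \subseteq \{\mathfrak{m}\}\} = \fl R$ and $\{X \mid \Ass_R X \subseteq \{(0)\}\} = \{\text{torsion-free modules}\}$ --- the first via the coincidence of ``finite length'' with ``Krull dimension zero'' for finitely generated modules over a noetherian local ring, and the second via the fact that over a domain a torsion-free module has zero annihilator on each nonzero element (so $(0)$ is its only possible associated prime), while conversely a nonzero torsion element $x$ produces a nonzero cyclic submodule $Rx \cong R/\operatorname{ann}_R(x)$ with $\operatorname{ann}_R(x) \neq 0$, hence a nonzero prime in $\Ass_R X$. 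Everything else --- invoking Theorem~\ref{thm:main}, quoting Takahashi's bijection, and counting the four subsets of the two-point set $\Spec R$ --- is bookkeeping.
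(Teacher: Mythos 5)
Your proposal is correct and follows essentially the same route as the paper, which simply cites Theorem~\ref{thm:main} together with Takahashi's classification for (1) and refers to \cite[Remark 4.9]{IMST} for the details of (2). The only difference is that you explicitly carry out the routine identifications of the four classes $\CC_S$ that the paper delegates to that reference, and those verifications are accurate.
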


\begin{proof}
  (1) This is an immediate consequence of Theorem \ref{thm:main} and \cite[Theorem B]{takahashi}.

  (2) This also easily follows from Takahashi's result. We refer the reader to \cite[Remark 4.9]{IMST} for details.
\end{proof}

This result extends \cite[Theorem 1.3 (2)]{IMST}, where the above result was established only for a completed numerical semigroup ring $R = k\llbracket H \rrbracket$ associated with a numerical semigroup $H\leq \N$ satisfying certain conditions. Our result applies to $R = k\llbracket H\rrbracket$ for every numerical semigroup $H\leq \N$ such that $\N\setminus H$ is a finite set.

As stated in the introduction and illustrated in Figure \ref{fig:rel}, our main result Theorem \ref{thm:main} and Stanley-Wang's result Theorem \ref{thm:SW} show that there are two distinct classes of subcategories of $\mod R$: (i) $\serre R = \tors R = \wide R = \ice R$, and (ii) $\torf R = \ike R = \ie R$. Moreover, the former class is contained in the latter.

In the rest of this paper, we address the natural question of when these two classes coincide. As our observation applies to non-commutative rings, we show the following general result, which provides a complete characterization of rings where all of the considered classes coincide.
\begin{theorem}\label{thm:char-all-same}
  Let $\Lambda$ be a right noetherian ring. Then the following conditions are equivalent:
  \begin{enumerate}
    \item $\Lambda$ is Morita equivalent to a finite direct product of right artinian local rings.
    \item $\serre \Lambda = \ie \Lambda$ holds (or equivalently, $\serre \Lambda = \tors \Lambda = \torf \Lambda = \wide \Lambda = \ice \Lambda = \ike \Lambda = \ie \Lambda$ holds).
    \item $\serre \Lambda = \torf \Lambda$ holds.
  \end{enumerate}
\end{theorem}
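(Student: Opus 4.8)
For the implication $(1)\Rightarrow(2)$ I would reduce to a single local factor. If $\Lambda$ is Morita equivalent to $\prod_{i=1}^{n}\Lambda_{i}$ with every $\Lambda_{i}$ right artinian and local, then $\mod\Lambda\simeq\prod_{i=1}^{n}\mod\Lambda_{i}$, and since each of the seven classes in the statement is closed under direct summands (for $\ie$ this is because a summand of $Y$ is the image of an idempotent endomorphism of $Y$), each of them decomposes as a product over the factors, the relevant operations being computed componentwise in a finite product of abelian categories. So it suffices to check that for a right artinian local ring $\Lambda'$ one has $\serre\Lambda'=\ie\Lambda'=\{0,\mod\Lambda'\}$. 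Here $\Lambda'$ has a unique simple module $S$, the category $\mod\Lambda'$ is a length category with $\Filt\{S\}=\mod\Lambda'$, and if $\CC\in\ie\Lambda'$ contains a nonzero $M$, then composing a surjection $M\twoheadrightarrow S$ onto a summand of the top with an injection $S\hookrightarrow M$ from a summand of the socle yields an endomorphism of $M$ with image $S$; hence $S\in\CC$, and closure under extensions then forces $\CC=\Filt\{S\}=\mod\Lambda'$. The implication $(2)\Rightarrow(3)$ is immediate from $\serre\Lambda\subseteq\torf\Lambda\subseteq\ie\Lambda$, and the same sandwich shows $\serre\Lambda=\ie\Lambda$ makes all seven classes coincide; so the real content is $(3)\Rightarrow(1)$.

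Assume $(3)$, i.e.\ every torsion-free class of $\mod\Lambda$ is Serre. The key observation I would extract, call it $(\star)$, is that \emph{every composition factor of every finitely generated module $M$ embeds into $M$}. Indeed, the torsion-free closure $\FFF(M)=\Filt(\Sub M)$ of Lemma~\ref{lem:torf-closure} is then Serre, hence closed under subquotients, so it contains all composition factors of $M$; and any simple object of $\Filt(\Sub M)$ must already lie in $\Sub M$, since in any of its filtrations the only possible nonzero subquotient is the object itself. From $(\star)$ I would deduce, first, that $\Lambda$ is right artinian: every nonzero finitely generated module has a simple quotient, so by $(\star)$ it has nonzero socle, whence the ascending socle series of $\Lambda_{\Lambda}$ is strictly increasing until it reaches $\Lambda$, must reach $\Lambda$ after finitely many steps by right noetherianity, and thereby exhibits $\Lambda_{\Lambda}$ as a finite iterated extension of finite-length semisimple modules (alternatively: under $(3)$ every torsion-free class is a torsion class, hence a strong torsion class in the noetherian category $\mod\Lambda$, so $\Lambda$ is right artinian by the proposition preceding this section). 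Second, $\Ext^{1}_{\Lambda}(S,T)=0$ for all non-isomorphic simple modules $S,T$: otherwise there is a non-split extension $0\to S\to M\to T\to 0$, whose socle is exactly $S$, yet $(\star)$ embeds the composition factor $T$ into $M$ and hence into $S$, which is absurd.

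Finally I would combine these. Since $\Lambda$ is right artinian it is semiperfect; let $P_{1},\dots,P_{m}$ be the indecomposable projectives, with simple tops $S_{1},\dots,S_{m}$, so that $\bigoplus_{i}P_{i}$ is a projective generator. The vanishing $\Ext^{1}(S_{j},S_{i})=0$ for $i\neq j$ means that $S_{i}$ does not occur in $\rad P_{j}/\rad^{2}P_{j}$, and an induction along the radical layers of $P_{j}$—using that $\rad P_{j}$ is a quotient of $P_{j}^{\oplus a}$ whenever $\rad P_{j}/\rad^{2}P_{j}\cong S_{j}^{\oplus a}$, so each radical layer of $\rad P_{j}$ is a quotient of the corresponding layer of $P_{j}^{\oplus a}$—shows that every composition factor of $P_{j}$ is isomorphic to $S_{j}$. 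Therefore $\Hom_{\Lambda}(P_{i},P_{j})=0$ for $i\neq j$, so the basic ring $\Gamma:=\End_{\Lambda}(\bigoplus_{i=1}^{m}P_{i})$ is Morita equivalent to $\Lambda$ and equals the product $\prod_{i=1}^{m}\End_{\Lambda}(P_{i})$; each factor $\End_{\Lambda}(P_{i})$ is local (an indecomposable projective over a semiperfect ring has local endomorphism ring) and right artinian (it is a direct ring factor of $\Gamma$, which is Morita equivalent to the right artinian ring $\Lambda$). This is exactly condition $(1)$.

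The step I expect to be the main obstacle is this last one: turning the $\Ext^{1}$-vanishing between simples into the block decomposition. For finite-dimensional algebras this is transparent through the Gabriel quiver, but over an arbitrary right artinian ring one must run the radical-layer bookkeeping directly, and one also has to be careful that the endomorphism rings of the indecomposable projectives are genuinely right artinian, not merely local (for which the Morita-invariance route above seems cleanest). By comparison, establishing $(\star)$ and the right artinianity of $\Lambda$ is short, and is exactly where the hypothesis $\torf\Lambda=\serre\Lambda$ is used.
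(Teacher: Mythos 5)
Your proof is correct and follows essentially the same route as the paper: the same image-of-a-composition argument for (1)$\Rightarrow$(2), and for (3)$\Rightarrow$(1) the same exploitation of the Serre property of the torsion-free closure $\Filt(\Sub M)$ to embed simple subquotients into $M$, deduce right artinianity from a simple-submodule/socle chain, and kill $\Ext^1$ between non-isomorphic simples. The only substantive difference is that you spell out the final passage from the $\Ext^1$-vanishing to the Morita/block decomposition into local rings (via radical layers of the indecomposable projectives), a step the paper treats as standard and leaves implicit.
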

\begin{proof}
  (1) $\Rightarrow$ (2):
  We may assume that $\Lambda$ is a finite direct product of right artinian local rings. Without loss of generality, we may further assume that $\Lambda$ is itself right artinian local, since considering IE-closed (resp. Serre) subcategories of $\mod\Lambda$ is equivalent to considering those for each ring-indecomposable summand of $\Lambda$.

  Let $\CC$ be a non-zero IE-closed subcategory of $\mod\Lambda$, and let $S$ denote the unique simple $\Lambda$-module.
  Suppose that there exists a non-zero module $0 \neq X \in \CC$. As $\Lambda$ is right artinian, there is a surjection $X \twoheadrightarrow S$ and an injection $S \hookrightarrow X$. Thus, $S$ is the image of the composition $X \twoheadrightarrow S \hookrightarrow X$, which means that $S$ belongs to $\CC$ since $\CC$ is closed under images.
  As a result, we have $\Filt S \subseteq \CC$ since $\CC$ is closed under extensions. However, since $\Lambda$ is artinian local, we have $\mod\Lambda = \Filt S$. Therefore, we obtain $\CC = \mod\Lambda$. This implies that the only possible IE-closed subcategories of $\mod\Lambda$ are the zero subcategory and $\mod\Lambda$ itself, and both of these are Serre subcategories. Hence, we have $\serre R = \ie R$.

  (2) $\Rightarrow$ (3): This implication is immediate since every torsion-free class is IE-closed.

  (3) $\Rightarrow$ (1):
  We divide the proof into two steps.

  \textbf{Step 1: $\Lambda$ is right artinian}:
  Consider the subcategory $\FF$ of $\mod\Lambda$ consisting of modules $F$ such that for every simple $\Lambda$-module $S$, we have $\Hom_\Lambda(S, F) = 0$. It is clear that $\FF$ is a torsion-free class, and hence $\FF$ is a Serre subcategory by the assumption.
  If there exists a nonzero module $0 \neq F \in \FF$, then since $F$ is finitely generated, there exists a surjection $F \twoheadrightarrow S$ onto some simple module $S$. Then, since $\FF$ is Serre, we must have $S \in \FF$, which is a contradiction since $\Hom_\Lambda(S, S) \neq 0$.
  Therefore, we obtain $\FF = 0$. This means that for every nonzero module $M$, there exists some simple module $S$ and a nonzero homomorphism $S \to M$. Since $S$ is simple, this is injective. Therefore, $M$ has a simple submodule. To summarize, every nonzero module has a simple submodule.

  It is now a standard exercise in ring theory to prove that $\Lambda$ is right artinian from this condition and the assumption that $\Lambda$ is right noetherian. We include the argument here for the convenience of the reader.
  Consider $\Lambda_\Lambda$. If $\Lambda \neq 0$, then it has a simple submodule $S_1$, and we write $M_1 := S_1$. Then consider the quotient $\Lambda/M_1$. If $\Lambda/M_1 \neq 0$, then this again has a simple submodule $S_2$, which corresponds to a submodule $M_2$ of $\Lambda$ such that $M_1 \subsetneq M_2 \subseteq \Lambda$ and $M_2/M_1 \iso S_2$.
  We can iterate this process to obtain the ascending chain of right ideals of $\Lambda$: $0 \subsetneq M_1 \subsetneq M_2 \subsetneq M_3 \subsetneq \cdots$. Since $\Lambda$ is right noetherian, this sequence must terminate, so we cannot iterate this process infinitely. This means that there exists some $l$ such that $\Lambda/M_l = 0$, that is, $M_l = \Lambda$. Then the sequence $0 \subsetneq M_1 \subsetneq M_2 \subsetneq \cdots \subsetneq M_l = \Lambda$ clearly gives a composition series of $\Lambda$, so $\Lambda$ is right artinian.

  \textbf{Step 2: $\Lambda$ is Morita equivalent to a direct product of local rings}:
  Let $S_1, S_2, \dots, S_n$ be the complete set of non-isomorphic simple $\Lambda$-modules, which is finite since $\Lambda$ is right artinian. To show the claim, it suffices to prove $\Ext_\Lambda^1(S_i, S_j) = 0$ for every $i \neq j$. To do this, we will show that every short exact sequence
  \[
    \begin{tikzcd}
      0 \rar & S_j \rar & M \rar["\pi"] & S_i \rar & 0
    \end{tikzcd}
  \]
  in $\mod\Lambda$ splits if $i \neq j$.
  Consider the torsion-free closure $\FFF(M)$ of $M$. By assumption, this torsion-free class is a Serre subcategory, so it follows that $S_i \in \FFF(M)$ by $M \in \FFF(M)$. On the other hand, we have $\FFF(M) = \Filt (\Sub M)$ by Lemma \ref{lem:torf-closure}, so $S_i \in \Filt(\Sub M)$. However, since $S_i$ is simple, we must have $S_i \in \Sub M$, so we obtain an surjection $i \colon S_i \hookrightarrow M$. By the Jordan-H\"older theorem, we must have $\coker i \iso S_j$, which gives us the following diagram:
  \[
    \begin{tikzcd}
      & & 0 \dar \\
      & & S_i \dar["i"'] \\
      0 \rar & S_j \rar & M \dar \rar["\pi"] & S_i \rar & 0 \\
      & & S_j \dar \\
      & & 0
    \end{tikzcd}
  \]
  If $\pi i = 0$, then $i$ factors through $S_j$. However, Schur's lemma tells us that $\Hom_\Lambda(S_i, S_j) = 0$, so we must have $i = 0$, which leads to a contradiction. Therefore, we have $\pi i \neq 0$. Then, again by Schur's lemma, $\pi i$ is an isomorphism. This implies that $\pi$ is a retraction, so the original short exact sequence splits.
\end{proof}

Applying this observation to the commutative setting, we immediately obtain a categorical characterization of commutative artinian rings as follows.
We remark that this can be also proved using Gabriel and Takahashi's classification of Serre subcategories and torsion-free classes of $\mod R$ using $\Spec R$.
\begin{corollary}\label{cor:art-char}
  For a commutative noetherian ring $R$, the following conditions are equivalent:
  \begin{enumerate}
    \item $R$ is artinian.
    \item $\torf R = \serre R$ holds.
    \item $\serre R = \tors R = \torf R = \wide R = \ice R = \ike R = \ie R$ holds.
  \end{enumerate}
\end{corollary}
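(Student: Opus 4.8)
The plan is to deduce this corollary directly from Theorem~\ref{thm:char-all-same}, which already settles the (more general) non-commutative situation, so the only extra ingredient needed is a translation of its condition (1) into commutative language.

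First I would record the elementary commutative-algebra observation that, for a commutative noetherian ring $R$, the following are equivalent: (i)~$R$ is artinian; (ii)~$R$ is a finite direct product of commutative artinian local rings; (iii)~$R$ is Morita equivalent to a finite direct product of right artinian local rings. The implication (i)~$\Rightarrow$~(ii) is the classical structure theorem for commutative artinian rings (decompose along the finitely many maximal ideals, each factor being $R_{\mathfrak m}$, which is artinian local). The implication (ii)~$\Rightarrow$~(iii) is immediate, since a commutative artinian local ring is in particular right artinian local and $R$ is trivially Morita equivalent to itself. For (iii)~$\Rightarrow$~(i), a finite direct product of right artinian rings is right artinian, and right artinianness is a Morita-invariant property (a Morita equivalence preserves finite generation and the lattice of submodules of each module); hence $R$ is right artinian, and a commutative right artinian ring is artinian.

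With this in hand the corollary is purely bookkeeping: condition (2) of the corollary is exactly condition (3) of Theorem~\ref{thm:char-all-same} applied to $\Lambda = R$; condition (3) of the corollary is the parenthetical reformulation of condition (2) of that theorem; and by the previous paragraph condition (1) of the corollary is equivalent to condition (1) of that theorem. Since the three conditions of Theorem~\ref{thm:char-all-same} are equivalent, so are the three here.

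I do not anticipate a genuine obstacle: all the real content is carried by Theorem~\ref{thm:char-all-same}, and the only points requiring a word of justification are the two standard facts invoked above (the structure of commutative artinian rings, and Morita-invariance of right artinianness) together with correctly matching up the numbered conditions. As the statement notes, one may alternatively bypass Theorem~\ref{thm:char-all-same} and read the equivalence straight off the classifications depicted in the diagram in the introduction: $\serre R = \torf R$ would assert that the inclusion $\{\text{specialization-closed subsets of }\Spec R\} \subseteq \{\text{subsets of }\Spec R\}$ is an equality, i.e.\ that every singleton $\{\mathfrak p\}$ is specialization-closed, i.e.\ every prime is maximal, i.e.\ $\dim R = 0$; since $R$ is noetherian this is equivalent to $R$ being artinian. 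I would carry out the first route and mention this second one only as a remark.
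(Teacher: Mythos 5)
Your proposal is correct and matches the paper's own argument, which likewise deduces the corollary from Theorem~\ref{thm:char-all-same} via the structure theorem for commutative artinian rings (you simply spell out the Morita-invariance direction in more detail). Your closing remark about reading the equivalence off the Gabriel/Takahashi classifications is also exactly the alternative the paper mentions just before the corollary.
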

\begin{proof}
  It is well-known that a commutative artinian ring is a finite direct product of a commutative artinian local ring. Therefore, this follows from Theorem \ref{thm:char-all-same}.
\end{proof}

We conclude this paper by providing examples of how these types of properties (the coincidence of certain classes of subcategories) typically fail for non-local non-commutative rings.
\begin{example}\label{ex:a2}
  Let $k$ be a field, and consider the upper triangular matrix algebra $\Lambda := \begin{sbmatrix}k & k \\ 0 & k\end{sbmatrix}$, which is a 3-dimensional subalgebra of the matrix ring $M_2(k)$. It is well-known that there are exactly three indecomposable $\Lambda$-modules $A, B, C$ with a non-split short exact sequence
  \[
    \begin{tikzcd}
      0 \rar & A \rar & B \rar & C \rar & 0.
    \end{tikzcd}
  \]
  Since the classification of torsion(-free) classes is well-known for this algebra, we can use Theorem \ref{thm:ie-as-inter-of-closure} to enumerate all IE-closed subcategories (or one can use results of \cite{ES2}). The Hasse diagram of the IE-closed subcategories is shown in Figure \ref{fig:hasse}, where we only display the indecomposable modules contained in each subcategory.

  \begin{figure}
    \begin{tikzcd}[sep=tiny]
      & \{A, B, C\} \ar[dl, -] \ar[dr, -] \\
      \{A, B\} \ar[dd, -] \ar[dr, -] & & \{B, C\} \ar[dd, -] \ar[dl, -] \\
      & \{B\} \ar[dd, -] \\
      \{A\} \ar[dr, -] & & \{C \} \ar[dl, -] \\
      & \{ \}
    \end{tikzcd}
    \caption{The Hasse diagram of IE-closed subcategories of $\mod \Lambda$}
    \label{fig:hasse}
  \end{figure}
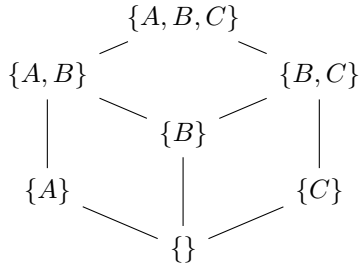

  In Table \ref{tab:list}, we list all the subcategories of each class. From this table, one can observe that \emph{none of these classes coincide}.

  \begin{table}[htp]
    \begin{tabular}{l|l|l}
      Classes    & Lists of subcategories                                       & Numbers \\ \hline \hline
      $\serre R$ & $\{\}, \{A\}, \{C\}, \{A, B, C\}$                            & 4       \\ \hline
      $\tors R$  & $\{\}, \{A\}, \{C\}, \{B, C\}, \{A, B, C\}$                  & 5       \\ \hline
      $\torf R$  & $\{\}, \{A\}, \{C\}, \{A, B\}, \{A, B, C\}$                  & 5       \\ \hline
      $\wide R$  & $\{\}, \{A\}, \{B\}, \{C\}, \{A, B, C\}$                     & 5       \\ \hline
      $\ice R$   & $\{\}, \{A\}, \{B\}, \{C\}, \{B, C\}, \{A, B, C\}$           & 6       \\ \hline
      $\ike R$   & $\{\}, \{A\}, \{B\}, \{C\}, \{A, B\}, \{A, B, C\}$           & 6       \\ \hline
      $\ie R$    & $\{\}, \{A\}, \{B\}, \{C\}, \{A, B\}, \{B, C\}, \{A, B, C\}$ & 7
    \end{tabular}
    \caption{The lists of each class of subcategories of $\mod\Lambda$}
    \label{tab:list}
  \end{table}
\end{example}

\addtocontents{toc}{\SkipTocEntry}
\subsection*{Acknowledgement}
The author would like to extend their gratitude to Osamu Iyama and Shunya Saito for their helpful discussions.
This work is supported by JSPS KAKENHI Grant Number JP21J00299.

\end{document}